\newtheorem{thm}{Theorem}[section]
\newtheorem{cor}[thm]{Corollary}
\newtheorem{lem}[thm]{Lemma}
\newtheorem{prop}[thm]{Proposition}
\theoremstyle{remark}
\newtheorem{defn}[thm]{\bf Definition}
\numberwithin{equation}{section}
\def\dist{\mathrm{dist}}
\def\R{\mathcal R}
\def\N{\mathcal N}
\def\RR{\mathbb R}
\title{Perturbation analysis of bounded homogeneous generalized inverses on Banach spaces}
\author{Jianbing Cao\thanks{Email: caocjb@163.com} \\
Department of mathematics, Henan Institute of Science and Technology\\
Xinxiang, Henan, 453003, P.R. China\\
Department of Mathematics, East China Normal University,\\
Shanghai 200241, P.R. China\\
\and
Yifeng Xue\thanks{Email: yfxue@math.ecnu.edu.cn; Corresponding author}\\
Department of Mathematics, East China Normal University,\\
Shanghai 200241, P.R. China }
\date{}
\begin{document}

\maketitle

\begin{abstract}
Let $X, Y$ be Banach spaces and $T : X \to Y$ be a bounded linear operator. In this paper, we initiate the study of the perturbation
problems for bounded homogeneous generalized inverse $T^h$ and quasi--linear projector generalized inverse $T^H$ of $T$. Some applications
to the representations and perturbations of the Moore--Penrose metric generalized inverse $T^M$ of $T$ are also given.
The obtained results in this paper extend some well--known results for linear operator generalized inverses in this field.
\vspace{3mm}

\noindent{2010 {\it Mathematics Subject Classification\/}: Primary 47A05; Secondary 46B20}

\noindent{\it Key words}: homogeneous operator, stable perturbation, quasi--additivity, generalized inverse.
\end{abstract}

\section{Introduction}
\setcounter{equation}{0}

The expression and perturbation analysis of the generalized inverses (resp. the Moore--Penrose inverses) of bounded
linear operators on Banach spaces (resp. Hilbert spaces) have been widely studied since Nashed's book \cite{NV1} was
published in 1976. Ten years ago, Chen and Xue proposed a notation so--called the stable perturbation of a bounded
operator instead of the rank--preserving perturbation of a matrix in \cite{CX1}. Using this new notation, they established
the perturbation analyses for the Moore--Penrose inverse and the least square problem on Hilbert spaces in
\cite{CWX,CX2,XC}. Meanwhile, Castro--Gonz\'alez and Koliha established the perturbation analysis for Drazin
inverse by using of the gap--function in \cite{GKR, GKW,K}. Later, some of their results were generalized by Chen and Xue
in \cite{X1,X2} in terms of stable perturbation.

Throughout this paper, $X, Y$ are always Banach spaces over real field $\RR$ and $B(X,Y)$ is the Banach space
consisting of bounded linear operators from $X$ to $Y$. For $T\in B(X,Y)$, let $\N(T)$ (resp. $\R(T)$) denote the null space
(resp. range) of $T$. It is well--known that if $\N(T)$ and $\R(T)$ are topologically complemented in the spaces $X$
and $Y$, respectively, then there exists a (projector) generalized inverse $T^+\in B(Y,X)$ of $T$ such that
$$
TT^+T = T, \quad T^+TT^+= T^+,\quad T^+T = I_X-P_{\N(T)}, \quad TT^+= Q_{\R(T)},
$$
where $P_{\N(T)}$ and $Q_{\R(T)}$ are the bounded linear projectors from $X$ and $Y$ onto $\N(T)$ and $\R(T)$,
respectively (cf. \cite{CWX, NV1, XYF1}). But, in general, not every closed subspace in a Banach space is
complemented. Thus the linear generalized inverse $T^+$ of $T$ may not exist. In this case, we may seek other types of
generalized inverses for $T$. Motivated by the ideas of linear generalized inverses and metric generalized inverses
(cf. \cite{NV1, WYW1}), by using the so--called homogeneous (resp. quasi--linear) projector in Banach space, Wang and Li defined the homogeneous (resp. quasi--linear) generalized inverse in \cite{WLS1}. Then,
some further study on these types of generalized inverses in Banach space was given in \cite{BWLX1,WLP1}. More important, from the results in \cite{WLP1,WYW1}, we know that, in some reflexive Banach spaces $X$ and $Y$, for an operator $T \in B(X,Y)$, there may exists a bounded quasi--linear (projector) generalized inverse of $T$, which is generally neither linear nor metric generalized inverse of $T$. So, from this point of view, it is important and necessary to study homogeneous and quasi--linear (projector) generalized inverses in Banach spaces.

Since the homogeneous (or quasi--linear) projector in Banach space are no longer linear, the linear projector generalized
inverse and the homogeneous (or quasi--linear) projector generalized inverse of linear operator in Banach spaces are
quite different. Motivated by the new perturbation results of closed linear generalized inverses \cite{DX2}, in this paper,
we initiate the study of the following problems for bounded homogeneous (resp. quasi--linear projector) generalized inverse: let
$T \in B(X,Y)$ with a bounded homogeneous (resp. quasi--linear projector) generalized inverse $T^h$ (resp. $T^H$), what conditions on the small perturbation
$\delta T$ can guarantee that the bounded homogeneous (resp. quasi--linear projector) generalized inverse $\bar{T}^h$ (resp. $\bar{T}^H$) of the perturbed operator
 $\bar{T} = T+ \delta T$ exists? Furthermore, if it exists, when does $\bar{T}^h$ (resp. $\bar{T}^H$) have the simplest expression
 $(I_X + T^h\delta T)^{-1}T^h$ (resp. $(I_X + T^H\delta T)^{-1}T^H$)? With the concept of the quasi--additivity and the notation of stable perturbation in
 \cite{CX1}, we will present some perturbation results on homogeneous generalized inverses and quasi--linear projector
 generalized inverses in Banach spaces. Explicit representation and perturbation for the Moore--Penrose metric
 generalized inverse of the perturbed operator are also given.

\section{Preliminaries}
\setcounter{equation}{0}

Let $T\in B(X,Y)\backslash\{0\}$. The reduced minimum module $\gamma(T)$ of $T$ is given by
\begin{equation}\label{eqaa}
\gamma(T)=\inf\{\|Tx\|\,\vert\,x\in X, \dist(x,\N(T))=1\},
\end{equation}
where $\dist(x,\N(T))=\inf\{\|x-z\|\,\vert\,z\in\N(T)\}$. It is well--known that $\R(T)$ is closed in $Y$ iff $\gamma(T)
>0$ (cf. \cite{TK,X2}). From (\ref{eqaa}), we can obtain useful inequality as follows:
$$
\|Tx\|\geq\gamma(T)\,\dist(x,\N(T)),\quad\forall\,x\in X.
$$

Recall from \cite{BWLX1,WP11} that a subset $D$ in $X$ is called to be homogeneous if $\lambda\,x\in D$ whenever
$x\in D$ and $\lambda\in\mathbb R$; a mapping $T\colon X \rightarrow Y$ is called to be a bounded homogeneous operator
if $T$ maps every bounded set in $X$ into a bounded set in $Y$ and $T(\lambda\, x)=\lambda\, T(x)$ for every $x\in X$ and every
$\lambda\in\mathbb R$.

Let $H(X,Y)$ denote the set of all bounded homogeneous operators from $X$ to $Y$. Equipped with the usual linear
operations on $H(X,Y)$ and norm on $ T\in H(X,Y)$ defined by $\|T\|=\sup\{\|Tx\|\,|\, \|x\|=1, x\in X\}$, we can easily
prove that $(H(X,Y), \|\cdot\|)$ is a Banach space (cf. \cite{WYW1,WP11}).

\begin{defn}
Let $M$ be a subset of $X$ and $T\colon X \rightarrow Y$ be a mapping. We call $T$ is quasi--additive on $M$ if $T$
satisfies
$$
T(x+z)=T(x) +T(z), \qquad \forall\; x\in X, \;\forall\; z\in M.
$$
\end{defn}

Now we give the concept of quasi--linear projector in Banach spaces.

\begin{defn}[cf. \cite{WLP1,WYW1}]\label{mdef1.1}
Let $P\in H(X,X)$. If $P^2=P$, we call $P$ is a homogeneous projector. In addition, if $P$ is also quasi--additive on $\R(P)$,
i.e., for any $x \in X$ and any $z\in \R(P)$,
  $$
  P(x + z) = P(x) + P(z) = P(x) + z,
  $$
then we call $P$ is a quasi--linear projector.
\end{defn}

Clearly, from Definition \ref{mdef1.1}, we see that the bounded linear projectors, orthogonal projectors in Hilbert spaces  are all quasi--linear projector.

Let $P\in H(X,X)$ be a quasi--linear projector. Then by \cite[Lemma 2.5]{WLP1}, $\R(P)$ is a closed linear subspace
of $X$ and $\R(I-P)=\N(P)$. Thus, we can define ``the quasi--linearly complement'' of a closed linear subspace as follows.
Let $V$ be a closed subspace of $X$. If there exists a bounded quasi--linear projector $P$ on $X$ such
that $V = \R(P)$, then $V$ is said to be bounded quasi--linearly complemented in $X$ and $\N(P)$ is the bounded
quasi--linear complement of $V$ in $X$. In this case, as usual, we may write $X=V\dotplus \N(P)$,
where $\N(P)$ is a homogeneous subset of $X$ and ``$\dotplus$'' means that $V\cap\N(P)=\{0\}$ and $X=V+\N(P)$.

\begin{defn}\label{maindef11.7}
Let $T \in B(X, Y)$. If there is $T^h\in H(Y, X)$ such that
$$
TT^hT=T,\ \quad T^hTT^h=T^h,
$$
then we call $T^h$ is a bounded homogeneous generalized inverse of $T$. Furthermore, if $T^h$ is also quasi--additive on $\mathcal{R}(T)$, i.e., for any $y\in Y$ and any $z\in \mathcal{R}(T)$, we have $$T^h(y+z)=T^h(y)+T^h(z),$$ then we call $T^h$ is a bounded quasi--linear generalized inverse of $T$.
\end{defn}

Obviously, the concept of bounded homogeneous (or quasi-linear) generalized inverse is a generalization of bounded linear generalized
inverse.

Definition \ref{maindef11.7} was first given in paper \cite{BWLX1} for linear transformations and bounded linear
operators. The existence of a homogeneous generalized inverse of $T\in B(X,Y)$ is also given in \cite{BWLX1}. In the following,
we will give a new proof of the existence of a homogeneous generalized inverse of a bounded linear operator.

\begin{prop}\label{prop11.7}
Let $T \in B(X, Y)\backslash\{0\}$. Then $T$ has a homogeneous generalized inverse $T^h\in H(Y,X)$ iff  $\R(T)$ is closed and there exist a bounded
quasi--linear projector $P_{\N(T)}\colon X \to \N(T)$ and a bounded homogeneous projector $Q_{\R(T)}: Y \to \R(T)$.
\end{prop}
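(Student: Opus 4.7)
The plan is to handle both directions via the natural candidates $P := I_X - T^hT$ and $Q := TT^h$, together with the reduced minimum module $\gamma(T)$ introduced in (\ref{eqaa}).

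For the necessity, suppose $T^h \in H(Y,X)$ satisfies $TT^hT = T$ and $T^hTT^h = T^h$. One first checks that $Q = TT^h$ is a bounded homogeneous idempotent, since $Q^2 = TT^hTT^h = TT^h = Q$, and that $\R(Q) = \R(T)$; this gives the required homogeneous projector $Q_{\R(T)}$. Similarly, $P = I_X - T^hT$ is a bounded homogeneous idempotent with $\R(P) = \N(T)$. For the quasi-additivity of $P$ on $\R(P) = \N(T)$ I would exploit the \emph{linearity} of $T$: for $x \in X$ and $z \in \N(T)$, $T(x+z) = Tx$, so $T^hT(x+z) = T^hTx$ and hence $P(x+z) = (x+z) - T^hTx = Px + z = Px + Pz$. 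For the closedness of $\R(T)$, the estimate $\dist(x, \N(T)) \le \|x - Px\| = \|T^hTx\| \le \|T^h\|\|Tx\|$ valid for every $x \in X$ (together with $T^h \ne 0$, forced by $T \ne 0$) yields $\gamma(T) \ge \|T^h\|^{-1} > 0$, whence $\R(T)$ is closed.

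For the sufficiency, assume $\R(T)$ is closed and the projectors $P_{\N(T)}$, $Q_{\R(T)}$ are given. Set $N := \N(P_{\N(T)})$, so that $X = \N(T) \dotplus N$ by the preliminary discussion of quasi-linear projectors. I would first establish that $T|_N : N \to \R(T)$ is a bijection: injectivity is immediate from $N \cap \N(T) = \{0\}$; for surjectivity, given $Tx \in \R(T)$, the element $x' := x - P_{\N(T)}x$ lies in $N$ (by quasi-additivity, $P_{\N(T)}(x - P_{\N(T)}x) = P_{\N(T)}x - P_{\N(T)}x = 0$) and satisfies $Tx' = Tx$. I would then define $T^h := (T|_N)^{-1} \circ Q_{\R(T)}$; this is homogeneous because $N$ is a homogeneous subset and $Q_{\R(T)}$ is homogeneous.

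The main obstacle is the \emph{boundedness} of $(T|_N)^{-1}$. For $x \in N$ and any $z \in \N(T)$, quasi-additivity of $P_{\N(T)}$ gives $P_{\N(T)}(x - z) = P_{\N(T)}x + P_{\N(T)}(-z) = -z$ (using $P_{\N(T)}z = z$ on $\R(P_{\N(T)})$ together with homogeneity), hence $\|z\| \le \|P_{\N(T)}\|\|x - z\|$ and $\|x\| \le (1 + \|P_{\N(T)}\|)\|x - z\|$. Infimizing over $z \in \N(T)$ and combining with $\|Tx\| \ge \gamma(T)\dist(x, \N(T))$ gives $\|x\| \le (1 + \|P_{\N(T)}\|)\gamma(T)^{-1}\|Tx\|$, so $(T|_N)^{-1}$ is bounded on $\R(T)$ and $T^h$ is a bounded homogeneous operator. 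The identities $TT^hT = T$ and $T^hTT^h = T^h$ then reduce to the routine calculations $T^hTx = x - P_{\N(T)}x$ for every $x \in X$ and $T^hy \in N$ for every $y \in Y$, both of which are immediate from the construction.
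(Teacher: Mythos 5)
Your proof is correct and takes essentially the same route as the paper's: the same projectors $I_X-T^hT$ and $TT^h$ with the same $\gamma(T)$ estimate for the necessity, and for the sufficiency the same inversion of $T$ on the complement $\R(I_X-P_{\N(T)})$ with the same norm bound $\gamma(T)^{-1}(1+\|P_{\N(T)}\|)$. The only cosmetic difference is that the paper factors the restricted operator through the quotient $X/\N(T)$ via the maps $\phi$ and $\hat T$, whereas you bound $(T|_N)^{-1}$ directly.
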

\begin{proof}
Suppose that there is $T^h\in H(Y,X)$ such that $TT^hT=T$ and $T^hTT^h=T^h$. Put $P_{\N(T)}=I_X-T^hT$ and $Q_{\R(T)}=TT^h$. Then
$P_{\N(T)}\in H(X,X)$, $Q_{\R(T)}\in H(Y,Y)$ and
\begin{align*}
P_{\N(T)}^2&=(I_X-T^hT)(I_X-T^hT)=I_X-T^hT-T^hT(I_X-T^hT)=P_{\N(T)},\\
Q_{\R(T)}^2&=TT^hTT^h=TT^h=Q_{\R(T)}.
\end{align*}
From $TT^hT=T$ and $T^hTT^h=T^h$, we can get that $\N(T)=\R(P_{\N(T)})$ and $\R(T)=\R(Q_{\R(T)})$. Since for any $x\in X$ and any
$z\in\N(T)$,
\begin{align*}
P_{\N(T)}(x+z)&=x+z-T^hT(x+z)=x+z-T^hTx\\&=P_{\N(T)}x+z=P_{\N(T)}x+P_{\N(T)}z,
\end{align*}
it follows that $P_{\N(T)}$ is quasi--linear. Obviously, we see that $Q_{\R(T)}: Y \to \R(T)$ is a bounded homogeneous projector.

Now for any $x\in X$,
$$
\dist(x,\N(T))\leq\|x-P_{\N(T)}x\|=\|T^hTx\|\leq\|T^h\|\|Tx\|.
$$
Thus, $\gamma(T)\geq\dfrac{1}{\|T^h\|}>0$ and hence $\R(T)$ is closed in $Y$.

Conversely, for $x\in X$, let $[x]$ stand for equivalence class of $x$ in $X/\N(T)$. Define mappings $\phi\colon
\R(I-P_{\N(T)})\rightarrow X/\N(T)$ and $\hat T\colon X/\N(T)\rightarrow\R(T)$ respectively, by
$$
\phi(x)=[x],\quad\forall\, x\in\R(I-P_{\N(T)})\ \text{and}\ \hat T([z])=Tz,\quad\forall\,z\in X.
$$
Clearly, $\hat T$ is bijective. Noting that the quotient space $X/\N(T)$ with the norm $\|[x]\|=\dist(x,\N(T))$,
$\forall\,x\in X$, is a Banach space (cf. \cite{XYF1}) and $\|Tx\|\geq\gamma(T)\,\dist(x,\N(T))$ with $\gamma(T)>0$,
$\forall\,x\in X$, we have $\|\hat T[x]\|\geq\gamma(T)\|[x]\|$, $\forall\,x\in X$. Therefore, $\|\hat T^{-1}y\|\leq
\dfrac{1}{\gamma(T)}\|y\|$, $\forall\,y\in\R(T)$.

Since $P_{\N(T)}$ is a quasi--linear projector, it follows that $\phi$ is bijective and $\phi^{-1}([x])=(I-P_{\N(T)})x$,
$\forall\,x\in X$. Obviously, $\phi^{-1}$ is homogeneous and for any $z\in\N(T)$,
$$
\|\phi^{-1}([x])\|=\|(I-P_{\N(T)})(x-z)\|\leq(1+\|P_{\N(T)}\|)\|x-z\|
$$
which implies that $\|\phi^{-1}\|\leq 1+\|P_{\N(T)}\|$. Put $T_0=\hat T\circ\phi\colon\R(I-P_{\N(T)})\rightarrow\R(T)$.
Then $T_0^{-1}=\phi^{-1}\circ\hat T^{-1}\colon\R(T)\rightarrow\R(I-P_{\N(T)})$ is homogeneous and bounded with
$\|T_0^{-1}\|\leq \gamma(T)^{-1}(1+\|P_{\N(T)}\|)$. Set $T^h=(I-P_{\N(T)})T_0^{-1}Q_{\R(T)}$. Then $T^h\in H(Y,X)$ and
$$
TT^hT=T,\ T^hTT^h=T^h,\ TT^h=Q_{\R(T)},\ T^hT=I_X-P_{\N(T)}.
$$
This finishes the proof.
\end{proof}

Recall that a closed subspace $V$ in $X$ is Chebyshev if for any $x\in X$, there is a unique $x_0\in V$ such that
$\|x-x_0\|=\dist(x,V)$. Thus, for the closed Chebyshev space $V$, we can define a mapping $\pi_V\colon X\rightarrow V$ by
$\pi_V(x)=x_0$. $\pi_V$ is called to be the metric projector from $X$ onto $V$. From \cite{WYW1}, we know that $\pi_V$ is a quasi--linear projector with $\|\pi_V\|\le 2$.
Then by Proposition
\ref{prop11.7}, we have

\begin{cor}[\cite{NRX1,WYW1}]\label{cor2a}
Let $T\in B(X,Y)\backslash\{0\}$ with $\R(T)$ closed. Assume that $\N(T)$ and $\R(T)$ are Chebyshev
subspaces in $X$ and $Y$, respectively. Then there is $T^h\in H(Y,X)$ such that
\begin{equation}\label{eqbb}
TT^hT=T,\ T^hTT^h=T^h,\ TT^h=\pi_{\R(T)},\ T^hT=I_X-\pi_{\N(T)}.
\end{equation}
\end{cor}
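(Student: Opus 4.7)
The plan is to reduce the statement directly to Proposition \ref{prop11.7} by verifying that the Chebyshev hypotheses on $\N(T)$ and $\R(T)$ produce exactly the two projectors required by that proposition. Once those are in hand, the desired $T^h$ and its defining identities will come for free from the construction already carried out in the proof of Proposition \ref{prop11.7}.

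First I would observe that the Chebyshev property of a closed subspace $V\subset Z$ gives a well--defined metric projector $\pi_V\colon Z\to V$, and the paper has already recalled from \cite{WYW1} that such a $\pi_V$ is a bounded quasi--linear projector with $\|\pi_V\|\le 2$. Applying this to $V=\N(T)\subset X$ yields a bounded quasi--linear projector $P_{\N(T)}:=\pi_{\N(T)}\colon X\to\N(T)$. Applying it to $V=\R(T)\subset Y$ (which is closed by hypothesis, so the Chebyshev assumption makes sense) yields a bounded quasi--linear projector $Q_{\R(T)}:=\pi_{\R(T)}\colon Y\to\R(T)$; in particular $Q_{\R(T)}$ is a bounded homogeneous projector onto $\R(T)$, which is all that Proposition \ref{prop11.7} requires on the range side.

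At this point, all hypotheses of Proposition \ref{prop11.7} are met: $\R(T)$ is closed, and we have the two projectors $P_{\N(T)}$ and $Q_{\R(T)}$ of the requested types. Invoking that proposition produces $T^h\in H(Y,X)$ satisfying
$$
TT^hT=T,\quad T^hTT^h=T^h,\quad TT^h=Q_{\R(T)},\quad T^hT=I_X-P_{\N(T)}.
$$
Substituting $Q_{\R(T)}=\pi_{\R(T)}$ and $P_{\N(T)}=\pi_{\N(T)}$ gives exactly (\ref{eqbb}).

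There is essentially no obstacle beyond citing the correct facts. The only subtle point worth double--checking is that the metric projector onto the Chebyshev subspace $\N(T)$ is indeed quasi--linear (and not merely homogeneous), since Proposition \ref{prop11.7} demands a bounded \emph{quasi--linear} projector on the kernel side; this is precisely the content of the result from \cite{WYW1} recalled just before the corollary, so no additional argument is needed.
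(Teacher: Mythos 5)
Your proposal is correct and follows exactly the route the paper takes: the paper likewise recalls that the metric projector onto a Chebyshev subspace is a bounded quasi--linear projector (hence homogeneous) and then invokes the construction in the proof of Proposition \ref{prop11.7}, which yields not only $TT^hT=T$ and $T^hTT^h=T^h$ but also $TT^h=Q_{\R(T)}=\pi_{\R(T)}$ and $T^hT=I_X-P_{\N(T)}=I_X-\pi_{\N(T)}$. You were also right to flag that one must use the explicit construction from that proof (not merely the existence statement of the proposition) to obtain the last two identities in (\ref{eqbb}).
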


The bounded homogeneous generalized inverse $T^h$ in \eqref{eqbb} is called to be the Moore--Penrose metric generalized
inverse of $T$. Such $T^h$ in (\ref{eqbb}) is unique and is denoted by $T^M$ (cf. \cite{WYW1}).

\begin{cor}\label{cor2b}
Let $T\in B(X,Y)\backslash\{0\}$ such that the bounded homogeneous generalized inverse $T^h$ exists. Assume that $\N(T)$ and $\R(T)$ are Chebyshev
subspaces in $X$ and $Y$, respectively. Then $T^M=(I_X-\pi_{\N(T)})T^h\pi_{\R(T)}$.
\end{cor}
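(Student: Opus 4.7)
I denote $S := (I_X - \pi_{\N(T)})T^h\pi_{\R(T)}$. Since $T^M$ is characterized uniquely by the four relations in \eqref{eqbb} (as stated after Corollary \ref{cor2a}), the plan is to verify that $S$ satisfies
$$
TST = T,\quad STS = S,\quad TS = \pi_{\R(T)},\quad ST = I_X - \pi_{\N(T)},
$$
and then conclude $S = T^M$.

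\textbf{Main steps.} First I collect the tools. From Proposition \ref{prop11.7} and the hypothesis, $\R(T)$ is closed and $TT^h, I_X - T^hT$ are homogeneous projectors onto $\R(T)$ and $\N(T)$ respectively. Since $\N(T)$ and $\R(T)$ are Chebyshev, $\pi_{\N(T)}$ and $\pi_{\R(T)}$ are quasi--linear projectors onto $\N(T)$ and $\R(T)$, so $\pi_{\R(T)}\circ T = T$ and $T\circ \pi_{\N(T)} = 0$ (because $\pi_{\N(T)}(x)\in\N(T)$ for every $x$), and both $\pi_{\N(T)}$ and $\pi_{\R(T)}$ fix their ranges pointwise and are quasi--additive on them.

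Next I verify the four identities in turn. Relation $TST = T$ follows immediately from $\pi_{\R(T)}T = T$, $T\pi_{\N(T)} = 0$, and $TT^hT = T$. Relation $TS = \pi_{\R(T)}$ follows similarly by writing $TS = TT^h\pi_{\R(T)}$ and observing that for $y\in Y$ we have $\pi_{\R(T)}y = Tx_0$ for some $x_0$, whence $TT^h(\pi_{\R(T)}y) = TT^hTx_0 = Tx_0 = \pi_{\R(T)}y$. For $ST = I_X - \pi_{\N(T)}$, I compute $ST = (I_X - \pi_{\N(T)})T^hT$ and use the key observation that $T^hTx - x \in \N(T)$ for every $x$ (since $T(T^hTx) = Tx$); then quasi--additivity of $\pi_{\N(T)}$ on $\N(T)$ gives $\pi_{\N(T)}(T^hTx) = \pi_{\N(T)}(x) + (T^hTx - x)$, and a direct cancellation yields $(I_X - \pi_{\N(T)})T^hTx = (I_X - \pi_{\N(T)})x$. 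Finally, $STS = S$ reduces via $ST = I_X - \pi_{\N(T)}$ to checking $(I_X - \pi_{\N(T)})^2 = I_X - \pi_{\N(T)}$, which again follows from the quasi--additivity of $\pi_{\N(T)}$ on $\N(T)$ (applied to $x + (-\pi_{\N(T)}x)$, using $-\pi_{\N(T)}x\in\N(T)$).

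\textbf{Main obstacle.} The non--linearity of $\pi_{\N(T)}$ makes the identities $(I_X - \pi_{\N(T)})T^hT = I_X - \pi_{\N(T)}$ and $(I_X - \pi_{\N(T)})^2 = I_X - \pi_{\N(T)}$ not automatic; the crucial step in each case is to exhibit the relevant argument as the sum of an arbitrary vector and an element of $\N(T)$ so that the quasi--additivity of $\pi_{\N(T)}$ on $\N(T)$ (Definition \ref{mdef1.1}) can be invoked. Once these two identities are established, the remaining verifications are straightforward substitutions, and uniqueness of $T^M$ closes the argument.
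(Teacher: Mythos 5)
Your proof is correct, but it takes a genuinely different route from the paper's. The paper first invokes Corollary \ref{cor2a} to get the (unique) $T^M$ satisfying $TT^M=\pi_{\R(T)}$ and $T^MT=I_X-\pi_{\N(T)}$, then simply substitutes these into the candidate expression and uses associativity of composition:
$$
(I_X-\pi_{\N(T)})T^h\pi_{\R(T)}=T^MT\,T^h\,TT^M=T^M(TT^hT)T^M=T^MTT^M=T^M,
$$
a three-line computation in which the non-linearity of the metric projectors never has to be confronted. You instead verify from scratch that $S=(I_X-\pi_{\N(T)})T^h\pi_{\R(T)}$ satisfies all four defining relations of $T^M$ and then appeal to uniqueness; the two non-trivial steps, $(I_X-\pi_{\N(T)})T^hT=I_X-\pi_{\N(T)}$ and $(I_X-\pi_{\N(T)})^2=I_X-\pi_{\N(T)}$, are handled correctly by writing the argument as $x+z$ with $z\in\N(T)=\R(\pi_{\N(T)})$ and invoking quasi-additivity of $\pi_{\N(T)}$ on its range, which is exactly the point where a naive ``linear'' manipulation would fail. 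What your approach buys is self-containedness: it only uses the elementary properties of metric projectors (they fix their range, are quasi-additive on it, and $T\pi_{\N(T)}=0$, $\pi_{\R(T)}T=T$) together with uniqueness, and in passing it re-proves the existence statement of Corollary \ref{cor2a}. What the paper's approach buys is brevity: by trading the explicit projectors for $T^MT$ and $TT^M$ it reduces everything to the algebra of the generalized-inverse identities. Both arguments are valid; yours is longer but arguably more illuminating about where quasi-additivity is actually needed.
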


\begin{proof}
Since $\N(T)$ and $\R(T)$ are Chebyshev subspaces, it follows from Corollary \ref{cor2a} that $T$ has the unique
Moore--Penrose metric generalized inverse $T^M$ which satisfy
$$
TT^MT=T,\ T^MTT^M=T^M,\ TT^M=\pi_{\R(T)},\ T^MT=I_X-\pi_{\N(T)}.
$$
Set $T^\natural=(I_X-\pi_{\N(T)})T^h\pi_{\R(T)}$. Then $T^\natural=T^MTT^hTT^M=T^MTT^M=T^M.$
\end{proof}

\section{\bf Perturbations for bounded homogeneous generalized inverse}
\setcounter{equation}{0}

In this section, we extend some perturbation results of linear generalized inverses to bounded homogeneous generalized inverses.
We start our investigation with some lemmas, which are prepared for the proof of our  main results. The following
result is well--known for bounded linear operators, we generalize it to the bounded homogeneous operators in the following
form.

\begin{lem}\label{qlem2.8}
Let $T\in H(X,Y)$ and $S\in H(Y,X)$ such that $T$ is quasi--additive on $\R(S)$ and $S$ is quasi--additive on $\R(T)$,
then $I_Y+TS$ is invertible in $H(Y,Y)$ if and only if $I_X+ST$ is invertible in $H(X,X)$.
\end{lem}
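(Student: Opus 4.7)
The strategy is to mimic the classical operator identity $(I+BA)^{-1} = I - B(I+AB)^{-1}A$, carefully tracking where quasi--additivity is invoked. Since the hypotheses are symmetric in the pairs $(T,X)$ and $(S,Y)$, I need only prove one implication: the reverse follows by interchanging the roles. So I will assume $U \in H(Y,Y)$ is the two--sided inverse of $I_Y + TS$ and show that
$$V := I_X - SUT \in H(X,X)$$
is the two--sided inverse of $I_X + ST$.

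For the identity $V(I_X + ST) = I_X$, fix $x \in X$ and observe that $ST(x) \in \R(S)$. Quasi--additivity of $T$ on $\R(S)$ therefore permits the splitting $T(x + ST(x)) = T(x) + TST(x) = (I_Y + TS)T(x)$. Applying $U$ and invoking $U(I_Y + TS) = I_Y$ gives $UT(x + ST(x)) = T(x)$, whence $V(I_X + ST)(x) = (x + ST(x)) - SUT(x + ST(x)) = x$. For the identity $(I_X + ST)V = I_X$, I use both quasi--additivity hypotheses. From $(I_Y + TS)U = I_Y$ applied at $T(x)$ I obtain $UT(x) + TSUT(x) = T(x)$; since $TSUT(x) \in \R(T)$, quasi--additivity of $S$ on $\R(T)$ upgrades this to $SUT(x) + STSUT(x) = ST(x)$. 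Now expand $(I_X + ST)V(x) = (x - SUT(x)) + ST(x - SUT(x))$: splitting the inner $T(x - SUT(x))$ by quasi--additivity of $T$ on $\R(S)$ and the outer $S$ by quasi--additivity of $S$ on $\R(T)$, everything collapses to $x$ in view of the identity just derived.

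The only real subtlety — and the only departure from the linear proof — is verifying at each distributive step that the "added" argument lies in the required range: $T(a+b)$ may be split only when $b \in \R(S)$, and $S(a+b)$ only when $b \in \R(T)$. Homogeneity of $T$ and $S$ handles the sign issues automatically (for instance, $-SUT(x) = S(-UT(x)) \in \R(S)$ and $-TSUT(x) = T(-SUT(x)) \in \R(T)$), so these verifications are routine but must not be skipped. Once both identities are established, the converse implication is obtained verbatim by exchanging the roles of $T$ and $S$ together with $X$ and $Y$; the resulting formula is $(I_Y + TS)^{-1} = I_Y - T(I_X+ST)^{-1}S$.
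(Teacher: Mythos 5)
Your proof is correct and takes essentially the same route as the paper: both use the candidate inverse $I_X - S(I_Y+TS)^{-1}T$ and verify the required identities by invoking quasi--additivity of $S$ on $\R(T)$ and of $T$ on $\R(S)$ at exactly the same points (the paper writes the computation at the operator level and leaves the second one--sided identity to ``similarly,'' while you check both sides element--wise and also make the homogeneity/sign bookkeeping explicit). No gaps.
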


\begin{proof}
If there is a $\Phi \in H(Y,Y)$ be such that $(I_Y+TS)\Phi =\Phi(I_Y+TS)=I_Y$, then
\begin{align*}
I_X&=I_X+ST-ST=I_X+ST-S((I_Y+TS)\Phi)T\\
&=I_X+ST-((S+STS)\Phi)T \quad(S\ \text{quasi-additive on}\ \R(T))\\
&=I_X+ST-((I_X+ST)S\Phi)T\\
&=(I_X+ST)(1_X-S\Phi T) \quad(T\ \text{quasi--additive on}\ \R(S)).
\end{align*}
Similarly, we also have $I_X=(I_X-S\Phi T)(I_X+ST)$. Thus, $I_X+ST$ is invertible on $X$ with $(I_X+ST)^{-1}
=(1_X-S\Phi T)\in H(X,X)$.

The converse can also be proved by using the same way as above.
\end{proof}

\begin{lem}\label{qlem11.3}
Let $T\in B(X,Y)$ such that $T^h\in H(Y,X)$ exists and let $\delta T\in B(X,Y)$ such that $T^h$ is quasi--additive on
$\R(\delta T)$ and $(I_X+T^h\delta T)$ is invertible in $B(X,X)$. Then $I_Y+\delta TT^h: Y\rightarrow Y$ is invertible
in $H(Y,Y)$ and
\begin{equation}\label{eqcc}
\Phi=T^h(I_Y+\delta TT^h)^{-1}=(I_X+T^h\delta T)^{-1}T^h
\end{equation}
is a bounded homogeneous operator with $\R(\Phi)=\R(T^h)$ and $\N(\Phi)=\N(T^h)$.
\end{lem}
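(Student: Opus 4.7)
The plan is to first produce the invertibility of $I_Y+\delta T T^h$, then establish the intertwining identity that yields the two equal formulas for $\Phi$, and finally read off the range and kernel from the most convenient representation.

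\textbf{Step 1 (invertibility).} I would invoke Lemma \ref{qlem2.8} with $T$ of that lemma replaced by $T^h$ and $S$ replaced by $\delta T$. The hypotheses to check are: $T^h$ is quasi-additive on $\R(\delta T)$ (given), and $\delta T$ is quasi-additive on $\R(T^h)$, which is automatic because $\delta T\in B(X,Y)$ is linear (hence additive on the whole space). Since $I_X+T^h\delta T$ is assumed invertible in $B(X,X)$, in particular it is invertible in $H(X,X)$, so Lemma \ref{qlem2.8} gives that $I_Y+\delta T T^h$ is invertible in $H(Y,Y)$.

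\textbf{Step 2 (the identity $\Phi = T^h(I_Y+\delta T T^h)^{-1}=(I_X+T^h\delta T)^{-1}T^h$).} I would prove the commutation identity
$$
T^h(I_Y+\delta T T^h)=(I_X+T^h\delta T)T^h
$$
pointwise: for $y\in Y$, the right hand side gives $T^h y+T^h\delta T T^h y$ by ordinary addition in $X$, while the left hand side gives $T^h(y+\delta T T^h y)$, and since $\delta T T^h y\in\R(\delta T)$ and $T^h$ is quasi-additive on $\R(\delta T)$, this equals $T^h y+T^h\delta T T^h y$ as well. Multiplying by the inverses on each side yields the two stated expressions for $\Phi$. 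This is the only place the quasi-additivity of $T^h$ is used, and it is exactly the substitute for ``linearity of $T^h$'' in the classical argument.

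\textbf{Step 3 (homogeneity, range, kernel).} Homogeneity is immediate: $\Phi$ is a composition of homogeneous maps (the linear bounded inverse $(I_X+T^h\delta T)^{-1}$ is in particular homogeneous), hence $\Phi\in H(Y,X)$, with boundedness inherited from the factors. For the range and kernel I would use the two representations in parallel. Using $\Phi=T^h(I_Y+\delta T T^h)^{-1}$, the inclusion $\R(\Phi)\subseteq\R(T^h)$ is obvious; conversely, any $T^h y$ equals $\Phi y'$ with $y'=(I_Y+\delta T T^h)y$, giving equality. Using $\Phi=(I_X+T^h\delta T)^{-1}T^h$ and the invertibility of $(I_X+T^h\delta T)$, we get $\Phi y=0 \Longleftrightarrow T^h y=0$, i.e.\ $\N(\Phi)=\N(T^h)$.

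\textbf{Expected obstacle.} There is no deep obstacle; the one subtlety to handle carefully is \emph{on which side} quasi-additivity is needed. Because $T^h$ is only homogeneous, one cannot freely distribute it across sums, and the identity in Step 2 must be verified by a direct evaluation that pushes $\delta T T^h y$ (an element of $\R(\delta T)$) through $T^h$ using the hypothesis. Once this is done, everything else reduces to standard manipulations with invertible bounded linear operators.
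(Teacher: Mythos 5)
Your proposal is correct and follows essentially the same route as the paper: invertibility of $I_Y+\delta TT^h$ via Lemma \ref{qlem2.8}, the intertwining identity $(I_X+T^h\delta T)T^h=T^h(I_Y+\delta TT^h)$ (which the paper states without the pointwise verification you supply, correctly using quasi--additivity of $T^h$ on $\R(\delta T)$), and then reading off homogeneity, range and kernel from the two representations. Your write--up is simply a more detailed version of the paper's argument.
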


\begin{proof}
By Lemma \ref{qlem2.8}, $I_Y+\delta TT^h: Y\rightarrow Y$ is invertible in $H(Y,Y)$.

Clearly, $I_X+T^h\delta T$ is linear bounded operator and $I_Y+\delta TT^h\in H(Y,Y)$. From the equation
$$
(I_X+T^h\delta T)T^h=T^h(I_Y+\delta TT^h)
$$
and $T^h \in H(Y, X)$, we get that $\Phi$ is a bounded homogeneous operator. Finally, from (\ref{eqcc}),
we can obtain that $\R(\Phi)=\R(T^h)$ and $\N(\Phi)=\N(T^h)$.
\end{proof}

Recall from \cite{CX1} that for $T\in B(X,Y)$ with bounded linear generalized inverse $T^+\in B(Y,X)$,
we say that $\bar T=T+\delta T\in B(X,Y)$ is a stable perturbation of $T$ if $\R(\bar T)\cap\N(T^+)=\{0\}$.
Now for $T\in B(X,Y)$ with $T^h\in H(Y,X)$, we also say that $\bar T=T+\delta T\in B(X,Y)$ is a stable perturbation of $T$ if
$\R(\bar T)\cap\N(T^h)=\{0\}$.

\begin{lem}\label{lem1.14}
Let $T\in B(X,Y)$ such that $T^h\in H(Y,X)$ exists. Suppose that $\delta T\in B(X,Y)$ such that $T^h$ is quasi--additive on
$\R(\delta T)$ and $I_X+T^h\delta T$ is invertible in $B(X,X)$ Put $\bar{T}=T+\delta T$. If
$\R(\bar{T})\cap \N(T^h)=\{0\}$, then
$$
\N(\bar T)=(I_X+T^h\delta T)^{-1}\N(T)\ \text{and}\ \R(\bar{T})=(I_Y+\delta TT^h)\R(T).
$$
\end{lem}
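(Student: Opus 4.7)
The plan is to prove the two set identities separately by a pair of inclusions each, invoking the stable-perturbation hypothesis $\R(\bar T)\cap\N(T^h)=\{0\}$ only where it is genuinely needed. The main structural facts available are $T^hTT^h=T^h$, $T^hT=I_X-P_{\N(T)}$, $TT^h=Q_{\R(T)}$, homogeneity of $T^h$, linearity of $T$ and $\delta T$, and quasi-additivity of $T^h$ on $\R(\delta T)$.

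For the kernel identity $\N(\bar T)=(I_X+T^h\delta T)^{-1}\N(T)$, the $\subseteq$ direction is immediate from homogeneity: if $x\in\N(\bar T)$, then $\delta Tx=-Tx$, so $T^h\delta Tx=-T^hTx=P_{\N(T)}x-x$, whence $(I_X+T^h\delta T)x=P_{\N(T)}x\in\N(T)$. For $\supseteq$, I would suppose $(I_X+T^h\delta T)x\in\N(T)$, apply the \emph{linear} operator $T$ to get $Tx+TT^h\delta Tx=0$, and rewrite $\bar Tx=\delta Tx-TT^h\delta Tx$. Applying $T^h$ and splitting via quasi-additivity on $\R(\delta T)$, then using $T^hTT^h=T^h$ and homogeneity, collapses the result to $T^h\bar Tx=T^h\delta Tx-T^h\delta Tx=0$. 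Thus $\bar Tx\in\R(\bar T)\cap\N(T^h)=\{0\}$, i.e., $x\in\N(\bar T)$. This is the unique place where the stable-perturbation hypothesis is used.

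For the range identity $\R(\bar T)=(I_Y+\delta TT^h)\R(T)$, the $\supseteq$ direction is a one-line calculation: for any $w\in X$,
\begin{equation*}
(I_Y+\delta TT^h)Tw=Tw+\delta T(I_X-P_{\N(T)})w=\bar Tw-\delta TP_{\N(T)}w=\bar T(w-P_{\N(T)}w),
\end{equation*}
where the last equality uses $TP_{\N(T)}=0$ so $\bar TP_{\N(T)}=\delta TP_{\N(T)}$. The $\subseteq$ direction is the main obstacle. Given $\bar Tx\in\R(\bar T)$, I would introduce the auxiliary vector $u=(I_X+T^h\delta T)^{-1}P_{\N(T)}x$. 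Since $(I_X+T^h\delta T)u=P_{\N(T)}x\in\N(T)$, the kernel identity just proved forces $u\in\N(\bar T)$, so $\delta Tu=-Tu\in\R(T)$. I would then set $z=Tx+\delta Tu=T(x-u)\in\R(T)$ and verify $(I_Y+\delta TT^h)z=\bar Tx$: quasi-additivity on $\R(\delta T)$ gives $T^hz=T^hTx+T^h\delta Tu=(I_X-P_{\N(T)})x+(P_{\N(T)}x-u)=x-u$, hence $(I_Y+\delta TT^h)z=z+\delta T(x-u)=Tx+\delta Tu+\delta Tx-\delta Tu=\bar Tx$.

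The crux — and the step that feels least routine — is guessing the preimage $z$ in the $\subseteq$ direction of the range identity. The choice $z=Tx+\delta Tu$ is motivated by the fact that, in the spirit of the proof of Lemma 3.1, one has $(I_Y+\delta TT^h)^{-1}=I_Y-\delta T(I_X+T^h\delta T)^{-1}T^h$, whose action on $\bar Tx$ simplifies, after one use of quasi-additivity, to precisely $Tx+\delta T(I_X+T^h\delta T)^{-1}P_{\N(T)}x$. Once $z$ is in hand, the remainder is bookkeeping with the four identities above, and the two inclusion proofs for the range must be carried out in this order because establishing $u\in\N(\bar T)$ depends on the kernel identity.
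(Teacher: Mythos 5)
Your proof is correct, and every appeal to quasi--additivity is legitimate: in the kernel step you split $T^h\bigl(\delta Tx-TT^h\delta Tx\bigr)$ with the summand $\delta Tx\in\R(\delta T)$ in the distinguished slot, and in the range step you split $T^h(Tx+\delta Tu)$ likewise, which is exactly what quasi--additivity on $\R(\delta T)$ permits; the stable--perturbation hypothesis enters only to pass from $T^h\bar Tx=0$ to $\bar Tx=0$, the same single point where the paper uses it. The route differs from the paper's in a way worth noting. The paper introduces the idempotent $P=(I_X+T^h\delta T)^{-1}(I_X-T^hT)$, proves $\R(P)=\N(\bar T)$ via the identity $Px=x-(I_X+T^h\delta T)^{-1}T^h\bar Tx$, and then gets $\R(\bar T)\subset(I_Y+\delta TT^h)\R(T)$ by computing $0=\bar TPx=(I_Y-TT^h)(I_Y+\delta TT^h)^{-1}\bar Tx$ and using $\N(I_Y-TT^h)=\R(T)$ --- a step that depends on the invertibility of $I_Y+\delta TT^h$ in $H(Y,Y)$ and the intertwining relation $T^h(I_Y+\delta TT^h)^{-1}=(I_X+T^h\delta T)^{-1}T^h$ supplied by Lemmas \ref{qlem2.8} and \ref{qlem11.3}. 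You instead chase elements directly and, for the harder range inclusion, exhibit the explicit preimage $z=T(x-u)\in\R(T)$ with $u=(I_X+T^h\delta T)^{-1}(I_X-T^hT)x$; observe that your auxiliary vector is exactly $u=Px$, so your argument is the paper's projector computation in constructive disguise. What your version buys is that the invertibility of $I_Y+\delta TT^h$ is never needed, only that of $I_X+T^h\delta T$, making the lemma self--contained modulo Lemma \ref{qlem11.3}'s algebraic identity; what the paper's version buys is the reusable operator identity $(I_Y-TT^h)(I_Y+\delta TT^h)^{-1}\bar T=0$, which is invoked again verbatim in the proof of Theorem \ref{mthm1.15}.
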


\begin{proof}
Set $P=(I_X+T^h\delta T)^{-1}(I_X-T^hT)$. We first show that $P^2 =P$ and $\R(P)=\N(\bar{T})$. Since $T^hTT^h=T^h$, we get $(I_X-T^hT)T^h \delta T=0$ and then
\begin{eqnarray}\label{lemeq3.2j}
(I_X-T^hT)(I_X+T^h\delta T)=I_X-T^hT
\end{eqnarray}
and so that
\begin{eqnarray}\label{lemeq3.3j}
I_X-T^hT=(I_X-T^hT)(I_X+T^h\delta T)^{-1}.
\end{eqnarray}
Now, by using \eqref{lemeq3.2j} and \eqref{lemeq3.3j}, it is easy to get $P^2=P$.

Since $T^h$ is quasi--additive on $\R(\delta T)$, we see $I_X- T^hT = (I_X+ T^h\delta T)-T^h \bar{T}$.
Then for any $x\in X$, we have
\begin{align}\label{lemeq3.3}
Px&=(I_X+T^h\delta T)^{-1}(I_X- T^hT)x \nonumber\\
&=(I_X+T^h\delta T)^{-1}[(I_X+ T^h\delta T)-T^h \bar{T}]x\nonumber\\
&=x-(I_X+T^h\delta T)^{-1}T^h\bar{T}x.
\end{align}
From \eqref{lemeq3.3}, we get that if $x\in \N(\bar{T})$, then $x\in \R(P)$. Thus, $\N(\bar{T})\subset \R(P)$.

 Conversely, let $z\in \R(P)$, then $z=Pz$. From \eqref{lemeq3.3}, we get $(I_X+T^h\delta T)^{-1}T^h\bar{T}x=0$.
 Therefore, we have $\bar{T}x\in \R(\bar{T})\cap\N(T^h)=\{0\}$. Thus, $x\in \N(\bar{T})$ and then $\R(P)=\N(\bar{T})$.

From the Definition of $T^h$, we have $\N(T)=\R(I_X- T^hT)$. Thus,
$$
(I_X+T^h\delta T)^{-1}\N(T)=(I_X+T^h\delta T)^{-1}\R(I_X- T^hT)=\R(P)=\N(\bar{T}).
$$

Now, we prove that $\R(\bar{T})=(I_Y+\delta TT^h)\R(T)$. From $(I_Y + \delta TT^h)T = \bar{T}T^hT$, we get that
$(I_Y + \delta TT^h)\R(T)\subset \R(\bar{T})$. On the other hand, since $T^h$ is quasi--additive on $\R(\delta T)$ and
$\R(P)=\N(\bar{T})$, we have for any $x\in X$,
\begin{align}\label{lemeq3.5j}
0&=\bar{T}Px=\bar{T}(I_X+T^h\delta T)^{-1}(I_X-T^hT)x\nonumber\\
&=\bar{T}x -\bar{T}(I_X+T^h\delta T)^{-1}(T^h\delta Tx+ T^h{T}x)\nonumber\\
&=\bar{T}x -\bar{T}(I_X+ T^h\delta T)^{-1}T^h\bar{T}x=\bar{T}x -\bar{T}T^h(I_Y+ \delta T T^h)^{-1}\bar{T}x\nonumber\\
&=\bar Tx-(I_Y+\delta TT^h-I_Y+TT^h)(I_Y+ \delta T T^h)^{-1}\bar{T}x\nonumber\\
&=(I_Y-T T^h)(I_Y+ \delta T T^h)^{-1}\bar{T}x.
\end{align}
Since $\N(I_Y-T T^h)=\R(T)$, it follows \eqref{lemeq3.5j} that $(I_Y+ \delta T T^h)^{-1}\R(\bar{T})\subset \R(T)$,
that is, $\R(\bar{T})\subset (I_Y+ \delta T T^h)\R(T)$. Consequently, $\R(\bar{T})=(I_Y+\delta TT^h)\R(T)$.
\end{proof}

Now we can present the main perturbation result for bounded homogeneous generalized inverse on Banach spaces.

\begin{thm}\label{mthm1.15}
Let $T\in B(X,Y)$ such that $T^h\in H(Y,X)$ exists. Suppose that $\delta T\in B(X,Y)$ such that $T^h$ is quasi--additive on
$\R(\delta T)$ and $I_X+T^h\delta T$ is invertible in $B(X,X)$. Put $\bar{T}=T+\delta T$. Then the following statements
are equivalent:
\begin{enumerate}
  \item[$(1)$] $\Phi=T^h(I_Y+\delta T T^h)^{-1}$ is a bounded homogeneous generalized inverse of $\bar{T}$;
  \item[$(2)$] $\R(\bar{T})\cap \N(T^h)=\{0\}$;
  \item[$(3)$] $\R(\bar{T})=(I_Y+\delta TT^h)\R(T)$;
  \item[$(4)$] $(I_Y+T^h\delta T)\N(\bar{T})=\N(T)$;
  \item[$(5)$] $(I_Y+\delta TT^h)^{-1}\bar{T} \N(T)\subset\R(T)$;
\end{enumerate}
\end{thm}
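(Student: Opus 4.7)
The plan is to show that condition $(2)$ is equivalent to each of $(1),(3),(4),(5)$ individually, rather than run a single long cycle. Two of the implications are almost for free: Lemma~\ref{lem1.14} directly yields $(2)\Rightarrow(3)$, and its companion conclusion $\N(\bar T)=(I_X+T^h\delta T)^{-1}\N(T)$ rearranges to $(I_X+T^h\delta T)\N(\bar T)=\N(T)$, which is $(4)$, giving $(2)\Rightarrow(4)$. Also $(3)\Rightarrow(5)$ is immediate, since $\bar T\N(T)\subset\R(\bar T)=(I_Y+\delta TT^h)\R(T)$ forces $(I_Y+\delta TT^h)^{-1}\bar T\N(T)\subset\R(T)$.

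For $(2)\Rightarrow(1)$ I would verify the two generalized-inverse identities for $\Phi$. Using that $T^h$ is quasi-additive on $\R(\delta T)$, expand $T^h\bar T=T^hT+T^h\delta T=(I_X+T^h\delta T)-(I_X-T^hT)$, so that
$$\Phi\bar T=(I_X+T^h\delta T)^{-1}T^h\bar T=I_X-P,\qquad P=(I_X+T^h\delta T)^{-1}(I_X-T^hT).$$
Under hypothesis $(2)$, Lemma~\ref{lem1.14} tells us $P$ is idempotent with $\R(P)=\N(\bar T)$. Therefore $\bar T\Phi\bar T=\bar T(I_X-P)=\bar T$, while $\Phi\bar T\Phi=(I_X-P)\Phi=\Phi$ because $P\Phi=0$: indeed $P\Phi$ contains the factor $(I_X-T^hT)T^h=T^h-T^hTT^h=0$. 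Conversely, $(1)\Rightarrow(2)$ is a one-liner: if $y\in\R(\bar T)\cap\N(T^h)$ and $y=\bar Tx$, then $\Phi y=(I_X+T^h\delta T)^{-1}T^hy=0$, so $y=\bar T\Phi\bar Tx=\bar T\Phi y=0$.

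It remains to close the loop with $(4)\Rightarrow(2)$ and $(5)\Rightarrow(2)$. Both arguments open the same way: pick $y=\bar Tx\in\R(\bar T)\cap\N(T^h)$ and apply quasi-additivity to obtain $T^hTx+T^h\delta Tx=0$, i.e.
$$(I_X+T^h\delta T)x=(I_X-T^hT)x\in\N(T).$$
Under $(4)$ this already finishes the argument, since $(I_X+T^h\delta T)^{-1}\N(T)=\N(\bar T)$ then forces $x\in\N(\bar T)$, hence $y=0$. The implication $(5)\Rightarrow(2)$ is the main obstacle. The key trick will be the identity $\bar TT^hT=TT^hT+\delta TT^hT=T+\delta TT^hT=(I_Y+\delta TT^h)T$, which yields the decomposition
$$y=\bar Tx=(I_Y+\delta TT^h)Tx+\bar T(I_X-T^hT)x.$$
Since $(I_X-T^hT)x\in\N(T)$, hypothesis $(5)$ gives $(I_Y+\delta TT^h)^{-1}\bar T(I_X-T^hT)x\in\R(T)$; adding $Tx\in\R(T)$ shows $(I_Y+\delta TT^h)^{-1}y\in\R(T)$. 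Writing this element as $Tw$, one has $y=Tw+\delta TT^hTw$, so another application of quasi-additivity gives $T^hy=(I_X+T^h\delta T)T^hTw$. Invertibility of $I_X+T^h\delta T$ forces $T^hTw=0$, whence $Tw=TT^hTw=0$ and finally $y=0$, completing the proof.
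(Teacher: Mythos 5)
Your overall architecture (hub at $(2)$, with $(2)\Leftrightarrow(1)$, $(2)\Rightarrow(3)\Rightarrow(5)\Rightarrow(2)$, and $(2)\Leftrightarrow(4)$) does cover all five equivalences, and it differs from the paper's single cycle $(3)\Rightarrow(5)\Rightarrow(4)\Rightarrow(2)\Rightarrow(3)\Rightarrow(1)\Rightarrow(3)$. Most of your steps are correct and use the same ingredients as the paper: $(2)\Rightarrow(3)$ and $(2)\Rightarrow(4)$ are indeed immediate from Lemma \ref{lem1.14}, your $(2)\Rightarrow(1)$ computation matches the paper's $(3)\Rightarrow(1)$ (the identity $P\Phi=0$ via the factor $(I_X-T^hT)T^h=0$ is legitimate once you switch to the representation $\Phi=T^h(I_Y+\delta TT^h)^{-1}$ from Lemma \ref{qlem11.3}), your $(1)\Rightarrow(2)$ is actually cleaner than the paper's $(1)\Rightarrow(3)$, and your $(4)\Rightarrow(2)$ is the paper's argument.

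The one genuine gap is in $(5)\Rightarrow(2)$, which is the only place you depart substantively from the paper (it goes $(5)\Rightarrow(4)\Rightarrow(2)$ instead). From $y=(I_Y+\delta TT^h)Tx+\bar T(I_X-T^hT)x$ you conclude that $(I_Y+\delta TT^h)^{-1}y=Tx+(I_Y+\delta TT^h)^{-1}\bar T(I_X-T^hT)x$, i.e.\ you distribute $(I_Y+\delta TT^h)^{-1}$ over a sum. But $I_Y+\delta TT^h$ and its inverse are only \emph{homogeneous} operators here, not additive ones --- this is exactly the pitfall the paper's quasi-additivity hypotheses are designed to guard against, so the step cannot be taken for granted. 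It is repairable: the second summand equals $\delta T(I_X-T^hT)x\in\R(\delta T)$, and Lemma \ref{qlem2.8} gives the explicit formula $(I_Y+\delta TT^h)^{-1}=I_Y-\delta T(I_X+T^h\delta T)^{-1}T^h$, from which additivity over a sum whose second term lies in $\R(\delta T)$ follows from the quasi-additivity of $T^h$ on $\R(\delta T)$ together with the linearity of $\delta T$ and $(I_X+T^h\delta T)^{-1}$. You should either insert that verification or sidestep the issue entirely by proving $(5)\Rightarrow(4)$ as the paper does: for $x\in\N(T)$ write $\bar Tx=(I_Y+\delta TT^h)Tz=\bar TT^hTz$ and use linearity of $\bar T$ alone to get $x-T^hTz\in\N(\bar T)$, which never requires applying the nonlinear inverse to a sum. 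The remainder of your $(5)\Rightarrow(2)$ argument (deducing $T^hy=(I_X+T^h\delta T)T^hTw=0$, hence $Tw=0$ and $y=0$) is correct.
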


\begin{proof}
We prove our theorem by showing that
$$
(3)\Rightarrow (5)\Rightarrow (4)\Rightarrow (2)\Rightarrow (3)\Rightarrow
(1)\Rightarrow (3).
$$

$(3)\Rightarrow (5)$ This is obvious since $(I+\delta TT^h)$ is invertible and $\N(T) \subset X$.

$(5)\Rightarrow (4)$. Let $x\in \N(\bar{T})$, then we see
 $(I_X+T^h\delta T)x=x-T^hTx\in \N(T)$. Hence $(I_X+T^h\delta T)\N(\bar{T})\subset
\N(T)$. Now for any $x\in \N(T)$, then by (5), there exists some $z\in X$ such that $\bar{T}x =(I_Y+\delta TT^h)Tz=
\bar{T}T^hTz.$ So $x-T^hTz\in \N(\bar{T})$ and hence
$$
(I_X+T^h\delta T)(x-T^hTz)=(I_X-T^hT)(x-T^hTz)=x.
$$
Consequently, $(I_X+T^h\delta T)\N(\bar{T})=\N(T)$.

 $(4)\Rightarrow (2)$.
Let $y\in R(\overline{T})\cap N(T^h)$, then there exists an $x\in X$ such that $y=\bar{T}x$ and $T^h\bar{T}x=0$. We can check that
$$T(I_X+T^h\delta T)x=Tx+TT^h\delta Tx=Tx+TT^h\bar{T}x-TT^hTx=0.
$$
Thus, $(I_X+T^h\delta T)x\in \N(T)$. By (4), $x\in \N(\bar{T})$ and so that $y=\bar{T}x=0$.

$(2)\Rightarrow (3)$ follows from Lemma \ref{lem1.14}.

$(3)\Rightarrow (1)$ Noting that by Lemma \ref{qlem11.3}, we have $$\Phi=T^h(I_Y+\delta
TT^h)^{-1}=(I_X+T^h\delta
T)^{-1}T^h$$
is a bounded homogeneous operator with $\R(\Phi)=\R(T^h)$ and $\N(\Phi)=\N(T^h)$.
We need to prove that $\Phi \bar{T} \Phi=\Phi$ and $\bar{T}\Phi\bar{T}=\bar{T}$. Since $T^h$ is quasi--additive
on $\R(\delta T)$, we have $T^h\bar T=T^hT+T^h\delta T$. Therefore,
\begin{align*}
\Phi \bar{T} \Phi&=(I_Y+T^h\delta T)^{-1}T^h\bar{T}(I_Y+T^h\delta T)^{-1}T^h\\
&=(I_Y+T^h\delta T)^{-1}T^h[(I_X+T^h\delta T)-(I_X-T^hT)](I_X+T^h\delta T)^{-1}T^h\\
&=(I_X+T^h\delta T)^{-1}T^h-(I_X+T^h\delta T)^{-1}(I_X-T^hT)(I_X+T^h\delta T)^{-1}T^h\\
&=(I_X+T^h\delta T)^{-1}T^h-(I_X+T^h\delta T)^{-1}(I_X-T^hT)T^h(I_X+\delta TT^h)^{-1}\\
&=\Phi.
\end{align*}
$\R(\bar{T})=(I_Y+\delta TT^h)\R(T)$ means that $(I_Y-TT^h)(I_Y+\delta TT^h)^{-1}\bar T=0$. So
\begin{align*}
\bar T\Phi\bar T&=(T+\delta T)T^h(I_Y+T^h\delta T)^{-1}\bar T\\
&=(I_Y+\delta TT^h+TT^h-I_Y)(I_Y+T^h\delta T)^{-1}\bar T\\
&=\bar T.
\end{align*}

$(1)\Rightarrow (3)$ From $\bar T\Phi\bar T=\bar T$, we have $(I_Y-TT^h)(I_Y+\delta TT^h)^{-1}\bar T=0$ by the proof
of $(3)\Rightarrow (1)$. Thus,
$(I_Y+\delta TT^h)^{-1}\R(\bar T)\subset\R(T)$.  From $(I_Y + \delta TT^h)T = \bar{T}T^hT$, we get that
$(I_Y + \delta TT^h)\R(T)\subset \R(\bar{T})$. So $(I_Y + \delta TT^h)\R(T)=\R(\bar{T})$.
\end{proof}

\begin{cor}\label{corth3.6}
Let $T\in B(X,Y)$ such that $T^h\in H(Y,X)$ exists. Suppose that $\delta T\in B(X,Y)$ such that $T^h$ is quasi--additive on
$\R(\delta T)$ and $\|T^h\delta T\|<1$. Put $\bar{T}=T+\delta T$. If $\N(T)\subset \N(\delta T)$ or
$\R(\delta T)\subset \R(T)$, then $\bar{T}$ has a homogeneous bounded generalized inverse
$$
\bar{T}^h=T^h(I_Y+\delta TT^h)^{-1}=(I_X+T^h\delta T)^{-1}T^h.
$$
\end{cor}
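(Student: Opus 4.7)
The plan is to derive this as a direct application of Theorem \ref{mthm1.15} by checking its hypotheses and verifying one of the equivalent conditions (2) or (5) in each of the two cases.

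First I would dispose of the invertibility hypothesis of the theorem. Since $T^h\delta T$ is a bounded linear operator on $X$ with $\|T^h\delta T\|<1$, the standard Neumann series $(I_X+T^h\delta T)^{-1}=\sum_{n=0}^\infty (-T^h\delta T)^n$ shows that $I_X+T^h\delta T$ is invertible in $B(X,X)$. Combined with the hypothesis that $T^h$ is quasi-additive on $\R(\delta T)$, this meets the standing assumptions of Theorem \ref{mthm1.15}, and it also activates Lemma \ref{qlem11.3} so that the operator $\Phi=T^h(I_Y+\delta TT^h)^{-1}=(I_X+T^h\delta T)^{-1}T^h$ is a well-defined bounded homogeneous operator. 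It therefore suffices to verify any one of the conditions (1)--(5) of Theorem \ref{mthm1.15}, and then (1) will give the desired formula for $\bar T^h$.

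Next I would split into the two cases. Assume first that $\N(T)\subset \N(\delta T)$. For any $x\in \N(T)$ we have $Tx=0$ and $\delta Tx=0$, so $\bar Tx=0$. Hence $\bar T\N(T)=\{0\}$, and because a bounded homogeneous operator sends $0$ to $0$, $(I_Y+\delta TT^h)^{-1}\bar T\N(T)=\{0\}\subset \R(T)$, which is exactly condition (5) of the theorem. Assume instead that $\R(\delta T)\subset \R(T)$. I would verify condition (2), i.e.\ $\R(\bar T)\cap \N(T^h)=\{0\}$. Take $y=\bar Tx\in \R(\bar T)\cap \N(T^h)$. Since $\R(\delta T)\subset \R(T)$ we have $y=Tx+\delta Tx\in \R(T)$. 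Now $TT^h$ is an idempotent with range $\R(T)$ (from $TT^hT=T$), so it acts as the identity on $\R(T)$; thus $y=TT^hy=T\cdot 0=0$.

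In both cases Theorem \ref{mthm1.15} then supplies the conclusion that $\Phi$ is a bounded homogeneous generalized inverse of $\bar T$, and by Lemma \ref{qlem11.3} the two expressions $T^h(I_Y+\delta TT^h)^{-1}$ and $(I_X+T^h\delta T)^{-1}T^h$ coincide, yielding the displayed formula for $\bar T^h$. I do not anticipate any serious obstacle: the only delicate point is remembering that the hypothesis ``$T^h$ quasi-additive on $\R(\delta T)$'' is needed merely to invoke the theorem, not to verify conditions (2) or (5) themselves, which follow from purely set-theoretic inclusions. The identification $TT^h|_{\R(T)}=\mathrm{id}$ is the single small computational observation driving Case~2.
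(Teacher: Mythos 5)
Your proposal is correct and follows essentially the same route as the paper: in the first case it verifies condition (5) of Theorem \ref{mthm1.15} via $\N(T)\subset\N(\bar T)$, and in the second it verifies condition (2) via $\R(\bar T)\cap\N(T^h)\subset\R(T)\cap\N(T^h)=\{0\}$. Your version is slightly more careful in two spots the paper leaves implicit, namely the Neumann-series argument that $I_X+T^h\delta T$ is invertible (which requires noting that quasi-additivity of $T^h$ on $\R(\delta T)$ makes $T^h\delta T$ linear) and the observation that $TT^h$ acts as the identity on $\R(T)$.
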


\begin{proof}
If $N(T)\subset N(\delta T)$, then $N(T)\subset
N(\bar{T})$. So Condition (5) of Theorem \ref{mthm1.15} holds. If $\R(\delta T)\subset R(T)$, then
$R(\bar{T})\subset \R(T)$. So $\R(\bar T)\cap\N(T)\subset\R(T)\cap\N(T^h)=\{0\}$ and consequently, $\bar T$ has the
homogeneous bounded generalized inverse $T^h(I_Y+\delta TT^h)^{-1}=(I_X+T^h\delta T)^{-1}T^h$ by Theorem \ref{mthm1.15}.
\end{proof}

\begin{prop}
Let $T\in B(X,Y)$ with $\R(T)$ closed. Assume that $\N(T)$ and $\R(T)$ are Chebyshev subspaces in $X$ and $Y$, respectively.
Let $\delta T\in B(X,Y)$ such that $T^M$ is quasi--additive on $\R(\delta T)$ and $\|T^M\delta T\|<1$. Put
$\bar{T}=T+\delta T$. Suppose that $\N(\bar T)$ and $\overline{\R(\bar T)}$ are Chebyshev subspaces in $X$ and $Y$,
respectively. If $\R(\bar T)\cap\N(T^M)=\{0\}$, then $\R(\bar T)$ is closed in $Y$ and $\bar T$ has the
Moore--Penrose metric generalized inverse
$$
\bar T^M=(I_X-\pi_{\N(\bar T)})(I_X+T^M\delta T)^{-1}T^M\pi_{\R(\bar T)}
$$
with $\|\bar T^M\|\leq\dfrac{2\|T^M\|}{1-\|T^M\delta T\|}$.
\end{prop}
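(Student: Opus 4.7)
The plan is to reduce the proposition to three ingredients already established: Theorem \ref{mthm1.15}, which produces a bounded homogeneous generalized inverse for $\bar T$; Proposition \ref{prop11.7}, from which closedness of $\R(\bar T)$ follows as soon as such an inverse exists; and Corollary \ref{cor2b}, which recovers the Moore--Penrose metric generalized inverse of $\bar T$ from any homogeneous one by flanking it with the metric projectors $I_X-\pi_{\N(\bar T)}$ and $\pi_{\R(\bar T)}$.

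First I would use the Neumann series: the assumption $\|T^M\delta T\|<1$ makes $I_X+T^M\delta T$ invertible in $B(X,X)$ with $\|(I_X+T^M\delta T)^{-1}\|\leq(1-\|T^M\delta T\|)^{-1}$. Together with the quasi--additivity of $T^M$ on $\R(\delta T)$ and the hypothesis $\R(\bar T)\cap\N(T^M)=\{0\}$, Theorem \ref{mthm1.15} (taking $T^h=T^M$) applies and yields the bounded homogeneous generalized inverse
$$
\bar T^h=(I_X+T^M\delta T)^{-1}T^M=T^M(I_Y+\delta T T^M)^{-1}
$$
of $\bar T$. The closedness estimate inside the proof of Proposition \ref{prop11.7}, applied to $\bar T$ with this $\bar T^h$, then gives $\gamma(\bar T)\geq 1/\|\bar T^h\|>0$, so $\R(\bar T)$ is closed in $Y$.

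Therefore $\overline{\R(\bar T)}=\R(\bar T)$, which is Chebyshev by hypothesis, as is $\N(\bar T)$. Corollary \ref{cor2a} produces the Moore--Penrose metric generalized inverse $\bar T^M$, and Corollary \ref{cor2b} applied with the homogeneous generalized inverse $\bar T^h$ above identifies it as
$$
\bar T^M=(I_X-\pi_{\N(\bar T)})\bar T^h\pi_{\R(\bar T)}=(I_X-\pi_{\N(\bar T)})(I_X+T^M\delta T)^{-1}T^M\pi_{\R(\bar T)},
$$
which is the formula asserted in the proposition.

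The norm bound falls out of elementary estimates on metric projectors onto a Chebyshev subspace $V$. Since $0\in V$, $\|(I_X-\pi_V)x\|=\dist(x,V)\leq\|x\|$, whence $\|I_X-\pi_{\N(\bar T)}\|\leq 1$; and $\|\pi_V(x)\|\leq\|x\|+\|x-\pi_V(x)\|\leq 2\|x\|$ gives $\|\pi_{\R(\bar T)}\|\leq 2$. Multiplying these with the Neumann bound on $(I_X+T^M\delta T)^{-1}$ and with $\|T^M\|$ yields $\|\bar T^M\|\leq 2\|T^M\|/(1-\|T^M\delta T\|)$. I do not foresee a serious obstacle; the only step that is not a direct invocation is the closedness of $\R(\bar T)$, and this is handled cleanly by the reduced minimum modulus estimate embedded in Proposition \ref{prop11.7}.
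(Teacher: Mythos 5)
Your proposal is correct and follows essentially the same route as the paper's own proof: invert $I_X+T^M\delta T$ by the Neumann series, invoke Theorem \ref{mthm1.15} to obtain $\bar T^h=(I_X+T^M\delta T)^{-1}T^M$, deduce closedness of $\R(\bar T)$ from Proposition \ref{prop11.7}, identify $\bar T^M$ via Corollary \ref{cor2b}, and conclude the norm bound from $\|I_X-\pi_{\N(\bar T)}\|\le 1$ and $\|\pi_{\R(\bar T)}\|\le 2$. The only cosmetic difference is that you make explicit the reduced-minimum-modulus estimate $\gamma(\bar T)\ge 1/\|\bar T^h\|$ hiding inside Proposition \ref{prop11.7}, which the paper cites wholesale.
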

\begin{proof}$T^M$ exists by Corollary \ref{cor2a}. Since $T^M\delta T$ is $\mathbb R$--linear and $\|T^M\delta T\|<1$,
we have $I_X+T^M\delta T$ is invertible in $B(X,X)$. By Theorem \ref{mthm1.15} and Proposition \ref{prop11.7},
$\R(\bar T)\cap\N(T^M)=\{0\}$ implies that $\R(\bar T)$ is closed and $\bar T$ has a bounded homogeneous generalized
inverse $\bar T^h=(I_X+T^M\delta T)^{-1}T^M$. Then by Corollary \ref{cor2b}, $\bar T^M$ has the form
$$
\bar T^M=(I_X-\pi_{\N(\bar T)})(I_X+T^M\delta T)^{-1}T^M\pi_{\R(\bar T)}.
$$
Note that $\|x-\pi_{\N(\bar T)}x\|=\dist(x,\N(\bar T))\leq\|x\|$, $\forall\,x\in X$. So $\|I_X-\pi_{\N(\bar T)}\|\leq 1$.
Therefore,
$$
\|\bar T^M\|\leq\|I_X-\pi_{\N(\bar T)}\|\|(I_X+T^M\delta T)^{-1}T^M\|\|\pi_{\R(\bar T)}\|
\leq\frac{2\|T^M\|}{1-\|T^M\delta T\|}.
$$
This completes the proof.
\end{proof}

\section{Perturbation for quasi--linear projector generalized inverse}
\setcounter{equation}{0}

We have known that the range of a bounded qausi--linear projector on a Banach space is closed(see \cite[Lemma 2.5]{WLP1}). Thus, from Definition \ref{maindef11.7} and the proof of Proposition \ref{prop11.7}, the following result is obvious.

\begin{prop}\label{corfj11.7}
Let $T \in B(X, Y)\backslash\{0\}$. Then $T$ has a bounded quasi--linear generalized inverse $T^h\in H(Y,X)$ iff there exist a bounded linear projector $P_{\N(T)}\colon X \to \N(T)$ and a bounded quasi--linear projector $Q_{\R(T)}: Y \to \R(T)$.
\end{prop}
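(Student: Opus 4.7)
The plan is to track which additivity property each of the two projectors inherits as one runs through the construction already used in Proposition \ref{prop11.7}, and to exploit the fact that the range of a bounded quasi--linear projector is automatically closed (cited from \cite[Lemma 2.5]{WLP1}). Thus no genuinely new machinery is needed: all that changes is the bookkeeping of ``linear vs.\ quasi--linear vs.\ homogeneous''.

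For the forward implication, assume $T$ has a bounded quasi--linear generalized inverse $T^h$ and set $P_{\N(T)}=I_X-T^hT$ and $Q_{\R(T)}=TT^h$. The computations in the proof of Proposition \ref{prop11.7} already give that these are bounded projectors with ranges $\N(T)$ and $\R(T)$. I only need to upgrade their additivity. Since $T^h$ is quasi--additive on $\R(T)=\R(Q_{\R(T)})$, for any $y\in Y$ and $z\in\R(T)$ one has $Q_{\R(T)}(y+z)=T(T^h(y+z))=T(T^hy+T^hz)=TT^hy+TT^hz$, using also the linearity of $T$, so $Q_{\R(T)}$ is quasi--linear. For $P_{\N(T)}$, the same quasi--additivity gives $T^hT(x+x')=T^h(Tx+Tx')=T^hTx+T^hTx'$ for every $x,x'\in X$; combined with homogeneity, $T^hT$ is $\mathbb R$--linear, so $P_{\N(T)}=I_X-T^hT$ is a bounded linear projector onto $\N(T)$.

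For the converse, given a bounded linear projector $P_{\N(T)}$ onto $\N(T)$ and a bounded quasi--linear projector $Q_{\R(T)}$ onto $\R(T)$, the subspace $\R(T)=\R(Q_{\R(T)})$ is closed by \cite[Lemma 2.5]{WLP1}, so the construction in Proposition \ref{prop11.7} applies verbatim and produces $T^h=(I_X-P_{\N(T)})T_0^{-1}Q_{\R(T)}$ satisfying $TT^hT=T$ and $T^hTT^h=T^h$. To verify quasi--additivity of $T^h$ on $\R(T)$, I would factor $T^h$ as $(I_X-P_{\N(T)})\circ\phi^{-1}\circ\hat T^{-1}\circ Q_{\R(T)}$ and feed $y+z$ with $z\in\R(T)$ through the four factors: first $Q_{\R(T)}(y+z)=Q_{\R(T)}y+z$ by quasi--linearity of $Q_{\R(T)}$; next $\hat T^{-1}$ splits the sum since $\hat T$ (hence $\hat T^{-1}$) is $\mathbb R$--linear on $\R(T)$; then $\phi^{-1}([u])=(I_X-P_{\N(T)})u$ is linear because $P_{\N(T)}$ is now assumed linear; and finally $I_X-P_{\N(T)}$ is linear. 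Composing these, $T^h(y+z)=T^hy+T^hz$.

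The main conceptual point, and really the only place care is needed, is the last paragraph: once $P_{\N(T)}$ is upgraded from quasi--linear to linear, every factor of $T^h$ except $Q_{\R(T)}$ is automatically $\mathbb R$--linear, so the single quasi--additivity supplied by $Q_{\R(T)}$ propagates through the remaining linear pieces and endows $T^h$ with exactly quasi--additivity on $\R(T)$. This also explains why the strengthened hypothesis (linearity of $P_{\N(T)}$, quasi--linearity of $Q_{\R(T)}$) is exactly what is forced by Definition \ref{maindef11.7}.
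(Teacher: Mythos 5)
Your proof is correct and follows exactly the route the paper intends: the paper gives no written proof of this proposition, merely asserting it is ``obvious'' from Definition \ref{maindef11.7} and the proof of Proposition \ref{prop11.7}, and your argument is precisely the bookkeeping (quasi--additivity of $T^h$ on $\R(T)$ forcing $T^hT$ to be linear, and the single quasi--additivity of $Q_{\R(T)}$ propagating through the otherwise linear factors of $T^h$) that makes that assertion rigorous.
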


Motivated by related results in papers \cite{BWLX1,WLP1,WLS1} and the definition of the oblique projection generalized inverses in Banach space(see \cite{NV1,XYF1}), based on Proposition \ref{corfj11.7}, we can give the following definition of quasi--linear projector generalized inverse of a bounded linear operator on Banach space.

\begin{defn}\label{quasidef1.1}
Let $T \in B(X,Y)$. Let $T^H \in H(Y,X)$ be a bounded homogeneous operator. If there exist a bounded linear projector $P_{\N(T)}$ from $X$ onto $\N(T)$ and a bounded quasi--linear projector $Q_{\R(T)}$ from $Y$ onto $\R(T)$, respectively, such that
\begin{eqnarray*}
(1)\, TT^HT = T; \quad (2)\, T^HTT^H= T^H; \quad  (3)\, T^HT = I_X-P_{\N(T)}; \quad  (4)\, TT^H= Q_{\R(T)}.
\end{eqnarray*}
Then $T^H$ is called a quasi--linear projector generalized inverse of $T$.
\end{defn}

For $T\in B(X,Y)$, if $T^H$ exists, then from Proposition \ref{corfj11.7} and Definition \ref{maindef11.7}, we see that
$\R(T)$ is closed and $T^H$ is quasi--additive on $\R(T)$, in this case, we may call $T^H$ is a quasi--linear operator. Choose $\delta T\in B(X,Y)$ such that $T^H$ is also
quasi--additive on $\R(\delta T)$, then $I_X+T^H\delta T$ is a bounded linear operator and
$I_Y+\delta TT^H$ is a bounded linear operator on $\R(\bar T)$.

\begin{lem}\label{mplem2.1}
Let $T\in B(X,Y)$ such that $T^H$ exists and let $\delta T\in B(X,Y)$ such that $T^H$ is quasi--additive on $\R(\delta T)$.
Put $\bar{T}=T+\delta T$. Assumes that $X=\N(\bar{T})\dotplus \R(T^H)$ and $Y=\R(\bar{T})\dotplus\N(T^H)$. Then
\begin{enumerate}
  \item[$(1)$] $I_X+T^H\delta T: X\rightarrow X$ is a invertible bounded linear operator;
  \item[$(2)$] $I_Y+\delta T T^H: Y\rightarrow Y$ is a invertible quasi--linear operator;
   \item[$(3)$] $\Upsilon=T^H(I_Y+\delta TT^H)^{-1}=(I_X+T^H\delta T)^{-1}T^H$ is a bounded homogeneous operator.
\end{enumerate}
\end{lem}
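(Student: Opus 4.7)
The plan is to establish (1) by proving $I_X+T^H\delta T$ is a bijection in $B(X,X)$, deduce (2) at once from Lemma~\ref{qlem2.8}, and read off (3) from a short operator identity. Quasi-additivity of $T^H$ on $\R(\delta T)$ together with its homogeneity makes $T^H\delta T$ an $\mathbb{R}$-linear bounded operator, so $I_X+T^H\delta T\in B(X,X)$ and only bijectivity remains.

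The decisive identity is
\[
I_X+T^H\delta T = P_{\N(T)}+T^H\bar T,
\]
obtained from $T^HT=I_X-P_{\N(T)}$ and $T^H\bar T=T^HT+T^H\delta T$ (the latter again using quasi-additivity of $T^H$ on $\R(\delta T)$). For $x=n+r$ with $n\in\N(\bar T)$ and $r\in\R(T^H)=\N(P_{\N(T)})$ (unique by $X=\N(\bar T)\dotplus\R(T^H)$), substituting $\bar Tn=0$ (so $\delta Tn=-Tn$) and $T^HTr=r$ gives
\[
(I_X+T^H\delta T)(n+r)=P_{\N(T)}n+T^H\bar Tr,
\]
with $P_{\N(T)}n\in\N(T)$ and $T^H\bar Tr\in\R(T^H)$. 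Since $X=\N(T)\dotplus\R(T^H)$, bijectivity of $I_X+T^H\delta T$ on $X$ reduces to bijectivity of the two restrictions $P_{\N(T)}|_{\N(\bar T)}\colon\N(\bar T)\to\N(T)$ and $T^H\bar T|_{\R(T^H)}\colon\R(T^H)\to\R(T^H)$. The first is easy: injectivity is the hypothesis $\N(\bar T)\cap\R(T^H)=\{0\}$, and for $m\in\N(T)$ one decomposes $m=n+r$ in $\N(\bar T)\dotplus\R(T^H)$ and applies $P_{\N(T)}$ to get $m=P_{\N(T)}n$. Injectivity of the second map is also short, since $T^H\bar Tr=0$ forces $\bar Tr\in\R(\bar T)\cap\N(T^H)=\{0\}$ and hence $r\in\N(\bar T)\cap\R(T^H)=\{0\}$.

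The step I expect to be the main obstacle is the surjectivity of $T^H\bar T|_{\R(T^H)}$; the hypothesis on $Y$ alone is not visibly enough, and both quasi-additivity assumptions have to be combined. Given $r=T^Hu\in\R(T^H)$, decompose $u=u_1+u_2$ with $u_1\in\R(\bar T)$ and $u_2\in\N(T^H)$, choose any preimage $u_1=\bar Tz$, and replace $z$ by its $\R(T^H)$-component $x$ in $X=\N(\bar T)\dotplus\R(T^H)$, so that $\bar Tx=u_1$. Writing $u=Tx+\delta Tx+u_2$ and applying quasi-additivity of $T^H$ first on $\R(T)$ (to peel off $Tx\in\R(T)$) and then on $\R(\delta T)$ (to peel off $\delta Tx\in\R(\delta T)$) produces
\[
T^Hu=T^HTx+T^H\delta Tx+T^Hu_2=T^H\bar Tx,
\]
since $T^Hu_2=0$. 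Thus $r=T^H\bar Tx$ with $x\in\R(T^H)$, completing surjectivity.

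The open mapping theorem then upgrades $I_X+T^H\delta T$ to a topological isomorphism in $B(X,X)$, which is (1). Part (2) is immediate from Lemma~\ref{qlem2.8} applied to the pair $(\delta T,T^H)$, since linearity of $\delta T$ makes it trivially quasi-additive on $\R(T^H)$. For (3), quasi-additivity of $T^H$ on $\R(\delta T)$ yields the operator identity $(I_X+T^H\delta T)T^H=T^H(I_Y+\delta TT^H)$; sandwiching it between the two inverses gives $\Upsilon=(I_X+T^H\delta T)^{-1}T^H=T^H(I_Y+\delta TT^H)^{-1}$, a composition of a bounded linear operator with the bounded homogeneous $T^H$, hence itself bounded and homogeneous.
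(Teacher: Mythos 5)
Your proof is correct and follows essentially the same route as the paper's: the paper likewise proves (1) by splitting $(I_X+T^H\delta T)x$ into its $\N(T)$- and $\R(T^H)$-components for injectivity and by deriving $\R(T^H)=T^H\R(\bar T)=T^H\bar T\R(T^H)$ from the two direct-sum hypotheses for surjectivity (exactly the surjectivity of your second block), while (2) and (3) come from Lemma \ref{qlem2.8} and the intertwining identity $(I_X+T^H\delta T)T^H=T^H(I_Y+\delta TT^H)$ just as in your argument. Your packaging of $I_X+T^H\delta T$ as the block-diagonal map $n+r\mapsto P_{\N(T)}n+T^H\bar Tr$ relative to $\N(\bar T)\dotplus\R(T^H)\to\N(T)\dotplus\R(T^H)$ is simply a tidier presentation of the same computation.
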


\begin{proof}
Since $I_X+T^H\delta T\in B(X,X)$, we only need to show that $\N(I_X+T^H\delta T)=\{0\}$ and $\R(I_X+T^H\delta T)=X$
under the assumptions.

We first show that $\N(I_X+T^H\delta T)=\{0\}$. Let $x\in \N(I_X+ T^H\delta T)$, then
$$
(I_X+ T^H\delta T)x=(I_X-T^HT)x+T^H\bar Tx=0
$$
since $T^H$ is quasi--linear. Thus $(I_X-T^HT)x=0=T^H\bar Tx$ and hence $\bar Tx\in\R(\bar T)\cap\N(T^H)$. Noting that
$Y=\R(\bar{T})\dotplus\N(T^H)$, we have $\bar Tx=0$ and hence $x\in\R(T^H)\cap\N(\bar T)$. From
$X=\N(\bar{T})\dotplus \R(T^H)$, we get that $x=0$.

Now, we prove that $\R(I_X+T^H\delta T)=X$. Let $x\in X$ and put $x_1=(I_X-T^HT)x$, $x_2=T^HTx$.
Since $Y=\R(\bar{T})\dotplus \N(T^H)$, we have $\R(T^H)=T^H\R(\bar T)$. Therefore, from $X=\N(\bar{T})\dotplus\R(T^H)$,
we get that $\R(T^H)=T^H\R(\bar T)=T^H\bar T\R(T^H)$. Consequently, there is $z\in Y$ such that $T^H(Tx_2-\bar Tx_1)=
T^H\bar TT^Hz$. Set $y=x_1+T^Hz\in X$.  Noting that $T^H$ is quasi--additive on $\R(T)$ and $\R(\delta T)$, respectively.
we have
\begin{align*}
(I_X+T^H\delta T)y&=(I_X-T^HT+T^H\bar{T})(x_1+T^Hz)\\
&=x_1+T^H\bar Tx_1+T^H\bar TT^Hz\\
&=x_1+T^H\bar Tx_1+T^H(Tx_2-\bar Tx_1)\\
&=x.
\end{align*}
Therefore, $X=\R(I_Y+T^H\delta T)$.

Similar to Lemma \ref{qlem11.3}, we have $\Upsilon=T^H(I_Y+\delta TT^H)^{-1}=(I_X+T^H\delta T)^{-1}T^H$ is a bounded
homogeneous operator.
\end{proof}

\begin{thm}\label{fmthm1.21}
Let $T\in B(X,Y)$ such that $T^H$ exists and let $\delta T\in B(X,Y)$ such that $T^H$ is quasi--additive on
$\R(\delta T)$. Put $\bar{T}=T+\delta T$. Then the following statements are equivalent:
\begin{enumerate}
\item[$(1)$] $I_X+T^H\delta T$ is invertible in $B(X,X)$ and $\R(\bar T)\cap\N(T^H)=\{0\};$
  \item[$(2)$] $I_X+T^H\delta T$ is invertible in $B(X,X)$ and $\Upsilon=T^H(I_Y+\delta TT^H)^{-1}=
  (I_X+T^H\delta T)^{-1}T^H$ is a quasi--linear projector generalized inverse of $\bar{T};$
  \item[$(3)$] $X=\N(\bar{T})\dotplus\R(T^H)$ and $Y=\R(\bar{T})\dotplus\N(T^H)$, i.e., $\N(\bar{T})$ is topological
  complemented in $X$ and $\R(\bar{T})$ is quasi--linearly complemented in $Y$.
\end{enumerate}
\end{thm}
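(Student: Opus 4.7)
The plan is to prove the theorem by the cyclic implication $(3) \Rightarrow (1) \Rightarrow (2) \Rightarrow (3)$, so that each step can draw on the structural information supplied by its predecessor.

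The step $(3) \Rightarrow (1)$ is essentially a direct appeal to Lemma \ref{mplem2.1}: the two decompositions $X = \N(\bar T) \dotplus \R(T^H)$ and $Y = \R(\bar T) \dotplus \N(T^H)$ yield at once that $I_X + T^H \delta T$ is invertible in $B(X,X)$, and the second decomposition makes $\R(\bar T) \cap \N(T^H) = \{0\}$ obvious.

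For $(1) \Rightarrow (2)$, I would first invoke Theorem \ref{mthm1.15} with $T^h = T^H$ to conclude that $\Upsilon$ is a bounded homogeneous generalized inverse of $\bar T$ with the stated formula. To upgrade $\Upsilon$ to a quasi--linear projector generalized inverse in the sense of Definition \ref{quasidef1.1}, I verify the two projector conditions. Using that $T^H$ is quasi--additive on $\R(T)$ and on $\R(\delta T)$, a short computation gives $I_X - \Upsilon \bar T = (I_X + T^H \delta T)^{-1}(I_X - T^H T)$, which is a bounded linear operator; it is idempotent since $(\Upsilon \bar T)^2 = \Upsilon \bar T$, and its range is $\N(\bar T)$ by Lemma \ref{lem1.14}. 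Next, $Q_{\R(\bar T)} := \bar T \Upsilon$ is idempotent with range $\R(\bar T)$ by the generalized--inverse equations. To see it is quasi--additive on $\R(\bar T)$, I use the identity $\R(\bar T) = (I_Y + \delta T T^H) \R(T)$ from Theorem \ref{mthm1.15} to write a generic $z \in \R(\bar T)$ as $z = (I_Y + \delta T T^H) w$ with $w \in \R(T)$; then successive applications of the quasi--additivity of $\delta T T^H$ and of $T^H$ on $\R(T)$, together with $T T^H w = w$, give $\bar T \Upsilon(y + z) = \bar T \Upsilon(y) + z$.

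For $(2) \Rightarrow (3)$, the bounded linear projector $I_X - \Upsilon \bar T$ provides the topological decomposition $X = \N(\bar T) \oplus \R(\Upsilon \bar T)$; the identity $\Upsilon \bar T \Upsilon = \Upsilon$ combined with $\R(\Upsilon) = \R(T^H)$ from Lemma \ref{qlem11.3} gives $\R(\Upsilon \bar T) = \R(T^H)$, hence $X = \N(\bar T) \dotplus \R(T^H)$. For the range side, the quasi--linear projector $\bar T \Upsilon$ gives $Y = \R(\bar T) \dotplus \N(\bar T \Upsilon)$, and $\N(T^H) \subset \N(\bar T \Upsilon)$ is immediate. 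For the reverse inclusion, note that (2) in particular makes $\Upsilon$ a bounded homogeneous generalized inverse of $\bar T$, so Theorem \ref{mthm1.15} yields $\R(\bar T) \cap \N(T^H) = \{0\}$; if $y \in \N(\bar T \Upsilon)$ then $\Upsilon y \in \R(T^H) \cap \N(\bar T) = \{0\}$, and via a short argument setting $z := (I_Y + \delta T T^H)^{-1} y$ and exploiting $y = z + \delta T T^H z$, one forces $T^H y = 0$. The main obstacle I expect is the quasi--additivity of $\bar T \Upsilon$ on $\R(\bar T)$ inside $(1) \Rightarrow (2)$: since quasi--additivity is strictly weaker than linearity, one must keep careful track of which operators are quasi--additive on which subspaces, and the crucial lever making the bookkeeping succeed is precisely the identification $\R(\bar T) = (I_Y + \delta T T^H) \R(T)$ supplied by Theorem \ref{mthm1.15}.
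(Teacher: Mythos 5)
Your proposal is correct and takes essentially the same route as the paper: the same implication cycle through $(3)\Rightarrow(1)$ via Lemma \ref{mplem2.1}, $(1)\Rightarrow(2)$ via Theorem \ref{mthm1.15} together with the projectors $(I_X+T^H\delta T)^{-1}(I_X-T^HT)$ and $\bar T\Upsilon$, and $(2)\Rightarrow(3)$ via the decompositions these projectors induce. The only cosmetic difference is that you verify quasi--additivity of $\bar T\Upsilon$ on $\R(\bar T)$ by parametrizing $\R(\bar T)=(I_Y+\delta TT^H)\R(T)$, whereas the paper writes $z=Tx+\delta Tx$ directly; the two computations are equivalent.
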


\begin{proof}$(1)\Rightarrow(2)$ By Theorem \ref{mthm1.15}, $\Upsilon=T^H(I_Y+\delta TT^H)^{-1}=(I_X+T^H\delta T)^{-1}T^H$
is a bounded homogeneous generalized inverse of $T$.  Let $y\in Y$ and $z\in\R(\bar T)$. Then $z=Tx+\delta Tx$ for some
$x\in X$. Since $T^H$ is quasi--additive on $\R(T)$ and $\R(\delta T)$, it follows that
$$
T^H(y+z)=T^H(y+Tx+\delta Tx)=T^H(y)+T^H(Tx)+T^H(\delta Tx)=T^Hy+T^Hz,
$$
i.e., $T^H$ is quasi--additive on $\R(\bar T)$ and hence $\Upsilon$ is quasi--linear. Set
$$
\bar{P}=(I_X+T^H\delta T)^{-1}(I_X-T^HT),\qquad \bar{Q}=\bar{T}(I_X+T^H\delta T)^{-1}T^H.
$$
Then, by the proof of Lemma \ref{lem1.14}, $\bar P\in H(X,X)$ is a projector with $\R(\bar P)=\N(\bar T)$. Noting that
$(I_X+T^H\delta T)^{-1}$ and $I_X-T^HT$ are all linear. So $\bar P$ is linear. Furthermore,
\begin{align*}
\Upsilon\bar T\! &=(I_X+T^H\delta T)^{-1}T^H(T+\delta T)\!\\&=(I_X+T^H\delta T)^{-1}(I_X+T^H\delta T+T^HT-I_X)\!\\&=I_X-\bar P.
\end{align*}

Since $T^H$ is quasi--additive on $\R(\bar T)$, it follow that $\bar{Q}=\bar{T}(I+T^H\delta T)^{-1}T^H=\bar T\Upsilon$
is quasi--linear and bounded with $\R(\bar Q)\subset\R(\bar T)$. Noting that
\begin{align*}
\bar Q&=\bar TT^H(I_Y+\delta TT^H)^{-1}=(I_Y+\delta TT^H+TT^H-I_Y)(I_Y+\delta TT^H)^{-1}\\
&=I_Y-(I_Y-TT^H)(I_Y+\delta TT^H)^{-1}
\end{align*}
and $(I_Y+\delta TT^H)^{-1}\R(\bar T)=\R(T)$ by Lemma \ref{lem1.14}, we have $\R(\bar T)=\bar Q(\R(\bar T))\subset
\R(\bar Q)$. Thus, $\R(\bar Q)=\R(\bar T)$. From $\Upsilon\bar T=I_X-\bar{P}$ and $\R(\bar P)=\N(\bar T)$, we see that $\Upsilon \bar{T} \Upsilon=\Upsilon$, then we have $$\bar{Q}^2=\bar{T}(I_X+T^H\delta T)^{-1}T^H\bar{T}(I_X+T^H\delta T)^{-1}T^H=\bar{T} \Upsilon \bar{T} \Upsilon=\bar{Q}.$$ Therefore, by Definition \ref{quasidef1.1}, we get $\bar T^H=\Upsilon$.

$(2)\Rightarrow (3)$ From $\bar T^H=T^H(I_Y+\delta TT^h)^{-1}=(I_X+T^H\delta T)^{-1}T^H$, we obtain that
$\R(\bar T^H)=\R(T^H)$ and $\N(\bar T^H)=\N(T^H)$. From $\bar T\bar T^H\bar T=\bar T$, $\bar T^H\bar T\bar T^H=\bar T^H$,
we get that
$$
\R(I_X-\bar T^H\bar T)=\N(\bar T),\ \R(\bar T^H\bar T)=\R(\bar T^H),\ \R(\bar T\bar T^H)=\R(\bar T),\
\R(I_Y-\bar T\bar T^H)=\N(\bar T^H)
$$
Thus $\R(\bar T^H\bar)=\R(T^H)$ and $\R(I_Y-\bar T\bar T^H)=\N(T^H)$. Therefore,
\begin{align*}
X&=\R(I_X-\bar T^H\bar T)\dotplus\R(\bar T^H\bar T)=\N(\bar T)\dotplus\R(T^H),\\
Y&=\R(\bar T\bar T^H)\dotplus\R(I_Y-\bar T\bar T^H)=\R(\bar T)\dotplus\N(T^H).
\end{align*}

$(3)\Rightarrow(1)$ By Lemma \ref{mplem2.1}, $I_X+T^H\delta T$ is invertible in $H(X,X)$. Now from
$Y=\R(\bar{T})\dotplus\N(T^H)$, we get that $\R(\bar{T})\cap\N(T^H)=\{0\}$.
\end{proof}
\begin{lem}[\cite{CC1}]\label{qlem2.9}
 Let $A\in B(X,X)$. Suppose that there exist two constants $\lambda_1, \lambda_2\in [0,1)$ such that
 $$
 \|Ax\|\leq \lambda_1\|x\|+\lambda_2\|(I+A)x\|,\quad\quad ( \forall \;x\in X).
 $$
 Then $I+A\colon X\rightarrow X$ is bijective. Moreover, for any $x\in X$,
 \begin{eqnarray*}
 \frac{1-\lambda_1}{1+\lambda_2}\|x\|\leq\|(I+A)x\|\leq\frac{1+\lambda_1}{1-\lambda_2}\|x\|,\quad
 \frac{1-\lambda_2}{1+\lambda_1}\|x\|\leq\|(I+A)^{-1}x\|\leq\frac{1+\lambda_2}{1-\lambda_1}\|x\|.
 \end{eqnarray*}
\end{lem}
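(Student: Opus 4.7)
The plan is to derive two-sided norm bounds for $I+A$ directly from the hypothesis, use the lower bound to get injectivity and closed range, then establish surjectivity by a homotopy argument along the family $T_t:=I+tA$, $t\in[0,1]$, and finally read off the bounds for $(I+A)^{-1}$.

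For the opening step I would use the two triangle inequalities $\|(I+A)x\|\le\|x\|+\|Ax\|$ and $\|x\|\le\|(I+A)x\|+\|Ax\|$; substituting the hypothesis $\|Ax\|\le\lambda_1\|x\|+\lambda_2\|(I+A)x\|$ into each and collecting like terms yields
$$
\frac{1-\lambda_1}{1+\lambda_2}\|x\|\le\|(I+A)x\|\le\frac{1+\lambda_1}{1-\lambda_2}\|x\|,\qquad\forall\,x\in X.
$$
Since $\lambda_1<1$, the left-hand inequality immediately gives injectivity and closed range of $I+A$.

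For surjectivity, observe that writing $(I+A)x=T_tx+(1-t)Ax$, substituting $\|(I+A)x\|\le\|T_tx\|+(1-t)\|Ax\|$ into the hypothesis, and solving for $\|Ax\|$ produces
$$
\|tAx\|\le\mu_1(t)\|x\|+\mu_2(t)\|T_tx\|,\qquad \mu_i(t):=\frac{t\lambda_i}{1-\lambda_2(1-t)}\in[0,1),
$$
uniformly for $t\in[0,1]$ (a short monotonicity check shows $\mu_i(t)\le\lambda_i<1$). Hence the opening step applies verbatim to each $T_t$: every $T_t$ is injective with closed range, with a uniform lower bound $\|T_tx\|\ge c\|x\|$, $c>0$, on $[0,1]$. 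Now put $S:=\{t\in[0,1]:T_t\text{ is bijective}\}$. Then $0\in S$; and $S$ is open because for $t_0\in S$ one has $T_t=T_{t_0}(I+(t-t_0)T_{t_0}^{-1}A)$, invertible for $|t-t_0|$ small by Neumann series. For closedness, pick $t_n\in S$ with $t_n\to t^*$ and any $y\in X$, put $x_n:=T_{t_n}^{-1}y$, and exploit the resolvent-type identity $T_{t_m}(x_n-x_m)=-(t_n-t_m)Ax_n$ (immediate from $T_{t_n}x_n=T_{t_m}x_m=y$). The uniform bounds on $\|x_n\|$ and on $\|T_{t_m}^{-1}\|$ from the previous step force $(x_n)$ to be Cauchy, and its limit $x^*$ satisfies $T_{t^*}x^*=y$ by norm continuity of $t\mapsto T_t$. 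Thus $S=[0,1]$; in particular $T_1=I+A$ is bijective.

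Finally, the stated bounds on $\|(I+A)^{-1}y\|$ are read off by substituting $x=(I+A)^{-1}y$ into the opening inequalities. I expect the main obstacle to be the closedness of $S$ in the homotopy step: openness is routine, but closedness needs the explicit resolvent identity together with the uniform invertibility constants supplied by the preliminary estimates. (If one happened to have $\lambda_1+2\lambda_2<1$, the opening estimate forces $\|A\|<1$ and a direct Neumann series for $(I+A)^{-1}$ would suffice; since that restriction is not available for general $\lambda_i\in[0,1)$, the homotopy seems unavoidable.)
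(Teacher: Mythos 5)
Your proof is correct, but note that the paper itself offers no proof of this lemma: it is imported verbatim from the cited reference [CC1] (Casazza--Christensen), so there is no in-paper argument to compare against. Your opening estimates are exactly the standard ones: substituting the hypothesis into $\|(I+A)x\|\le\|x\|+\|Ax\|$ and $\|x\|\le\|(I+A)x\|+\|Ax\|$ and clearing terms (legitimate since $\lambda_1,\lambda_2<1$) gives the two-sided bound, hence injectivity and closed range, and the bounds for $(I+A)^{-1}$ follow by the substitution $x=(I+A)^{-1}y$. The only genuinely nontrivial point is surjectivity, and your method of continuity handles it correctly: the computation $(1-\lambda_2(1-t))\|Ax\|\le\lambda_1\|x\|+\lambda_2\|T_tx\|$ does yield $\mu_i(t)=t\lambda_i/(1-\lambda_2(1-t))\le\lambda_i<1$ (since $t\le 1-\lambda_2+t\lambda_2$ for $t\le1$), so the whole family $T_t=I+tA$ is bounded below by the uniform constant $c=(1-\lambda_1)/(1+\lambda_2)$; openness of $S$ via the factorization $T_t=T_{t_0}(I+(t-t_0)T_{t_0}^{-1}A)$ and closedness via the identity $T_{t_m}(x_n-x_m)=(t_m-t_n)Ax_n$ together with the uniform bounds $\|x_n\|\le c^{-1}\|y\|$ and $\|T_{t_m}^{-1}\|\le c^{-1}$ are both sound, and connectedness of $[0,1]$ finishes it. Your closing remark is also accurate: the hypothesis only forces $\|A\|\le(\lambda_1+\lambda_2)/(1-\lambda_2)$, which need not be less than $1$, so a direct Neumann series is unavailable in general and some continuation (or equivalent) argument really is needed. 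This is a complete, self-contained proof of a statement the paper merely quotes.
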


Let $T\in B(X,Y)$ such that $T^H$ exists. Let $\delta T\in B(X,Y)$ such that $T^H$ is quasi--additive on $\R(\delta T)$
and satisfies
\begin{eqnarray}\label{eq11.2}
\|T^H\delta Tx\|\leq \lambda_1\|x\|+\lambda_2\|(I+T^H\delta T)x\|\quad  (\forall\; x\in X),
\end{eqnarray}
where $\lambda_1, \lambda_2\in [0,1)$.

\begin{cor}\label{mplemfj2.1}
Let $T\in B(X,Y)$ such that $T^H$ exists. Suppose that $\delta T\in B(X,Y)$ such that $T^H$ is quasi--additive on
$\R(\delta T)$ and satisfies \eqref{eq11.2}. Put $\bar{T}=T+\delta T$. Then $I_X+T^H\delta T$ is invertible in $H(X,X)$
and $\bar{T}^H=(I_X+ T^H\delta T)^{-1}T^H$ is well defined with
$$
\dfrac{\|\bar{T}^H-T^H\|}{\|T^H\|}\leq \dfrac{(2+\lambda_1)(1+\lambda_2)}{(1-\lambda_1)(1-\lambda_2)}.
$$
\end{cor}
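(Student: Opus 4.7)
The plan is to derive the conclusion directly from Lemma \ref{qlem2.9} applied to the operator $T^H\delta T$, then invoke Lemma \ref{qlem11.3} for the representation of $\bar T^H$, and finally estimate $\|\bar T^H-T^H\|$ by a short algebraic manipulation.

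First I would note that $T^H\delta T\in B(X,X)$ is genuinely $\mathbb R$-linear: since $\delta T$ is linear and $T^H$ is quasi-additive on $\R(\delta T)$, for any $x,y\in X$ we have $T^H\delta T(x+y)=T^H(\delta Tx+\delta Ty)=T^H\delta Tx+T^H\delta Ty$, while $\mathbb R$-homogeneity is automatic. Hence \eqref{eq11.2} is exactly the hypothesis of Lemma \ref{qlem2.9} applied with $A:=T^H\delta T$, which yields the invertibility of $I_X+T^H\delta T$ in $B(X,X)\subset H(X,X)$ together with the bound $\|(I_X+T^H\delta T)^{-1}\|\leq\frac{1+\lambda_2}{1-\lambda_1}$. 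Lemma \ref{qlem11.3}, applied with $T^H$ in place of $T^h$, then yields the identity $(I_X+T^H\delta T)^{-1}T^H=T^H(I_Y+\delta TT^H)^{-1}$ and confirms that $\bar T^H$ is a well-defined bounded homogeneous operator in $H(Y,X)$.

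For the quantitative estimate I would use the identity
$$
\bar T^H-T^H=\bigl[(I_X+T^H\delta T)^{-1}-I_X\bigr]T^H=-(I_X+T^H\delta T)^{-1}(T^H\delta T)T^H.
$$
The norm of $(I_X+T^H\delta T)^{-1}$ is already controlled, so the task reduces to bounding $\|T^H\delta T\|$. Feeding $\|(I_X+T^H\delta T)x\|\leq\|x\|+\|T^H\delta Tx\|$ into \eqref{eq11.2} and absorbing the $\lambda_2\|T^H\delta Tx\|$ term on the left gives $\|T^H\delta T\|\leq\frac{\lambda_1+\lambda_2}{1-\lambda_2}$. Multiplying the three ingredients produces the bound $\frac{(1+\lambda_2)(\lambda_1+\lambda_2)}{(1-\lambda_1)(1-\lambda_2)}\|T^H\|$, and since $\lambda_1+\lambda_2\leq 2\leq 2+\lambda_1$ this is dominated by the asserted $\frac{(2+\lambda_1)(1+\lambda_2)}{(1-\lambda_1)(1-\lambda_2)}\|T^H\|$.

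There is no real obstacle — the argument is a routine assembly of Lemma \ref{qlem2.9} and the formula from Lemma \ref{qlem11.3}. The one point worth flagging is the verification that $T^H\delta T$ is literally linear, which is what lets us use Lemma \ref{qlem2.9} as phrased for $B(X,X)$. I also note that the constant $(2+\lambda_1)$ in the statement is slightly looser than necessary: the sharper constant $\lambda_1+\lambda_2$ emerges naturally from the estimate above, and the authors likely keep the cosmetic factor $(2+\lambda_1)$ because it matches the form of related perturbation bounds elsewhere in the paper.
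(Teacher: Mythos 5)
Your proof is correct and follows essentially the same route as the paper: invertibility of $I_X+T^H\delta T$ via Lemma \ref{qlem2.9}, the two-sided representation $\bar T^H=(I_X+T^H\delta T)^{-1}T^H=T^H(I_Y+\delta TT^H)^{-1}$, and the estimate $\|\bar T^H-T^H\|\le\|(I_X+T^H\delta T)^{-1}\|\,\|T^H\delta T\|\,\|T^H\|$. The only real difference is that you bound $\|T^H\delta T\|$ by $\tfrac{\lambda_1+\lambda_2}{1-\lambda_2}$ directly from \eqref{eq11.2}, which is sharper than the paper's $\tfrac{2+\lambda_1}{1-\lambda_2}$ (obtained there from $\|I_X+T^H\delta T\|\le\tfrac{1+\lambda_1}{1-\lambda_2}$), and both dominate the stated constant.
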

\begin{proof} By using Lemma \ref{qlem2.9}, we get that $I_X+T^H\delta T$ is invertible in $H(X,X)$ and
\begin{eqnarray}\label{qeq3.62j}
\|(I_X+ T^H \delta T)^{-1}\|\leq \dfrac{1+\lambda_2}{1-\lambda_1},\qquad \|I_X+ T^H \delta T\|\leq
\dfrac{1+\lambda_1}{1-\lambda_2}.
\end{eqnarray}

From Theorem \ref{fmthm1.21}, we see $\bar{T}^H=T^H(I_Y+\delta T T^H)^{-1}=(I_X+ T^H\delta T)^{-1}T^H$ is well--defined.
Now we can compute
\begin{align}\label{qeq3.6j}
\dfrac{\|\bar{T}^H-T^H\|}{\|T^H\|}&\leq \dfrac{\|(I_X+ T^H\delta T)^{-1}T^H-T^H\|}{\|T^H\|}\nonumber\\
&\leq\dfrac{\|(I_X+ T^H\delta T)^{-1}[I_X-(I_X+ T^H\delta T)]T^H\|}{\|T^H\|}\nonumber\\
&\leq \|(I_X+ T^H\delta T)^{-1}\|\|T^H\delta T\|.
\end{align}

Since $\lambda_2\in [0, 1)$, then from the second inequality in \eqref{qeq3.62j}, we get that $\|T^H \delta T\|\leq \dfrac{2+\lambda_1}{1-\lambda_2}$. Now, by using \eqref{qeq3.6j} and \eqref{qeq3.62j}, we can obtain
$$\dfrac{\|\bar{T}^H-T^H\|}{\|T^H\|}\leq \dfrac{(2+\lambda_1)(1+\lambda_2)}{(1-\lambda_1)(1-\lambda_2)}.$$
This completes the proof.
\end{proof}

\begin{cor}
Let $T\in B(X,Y)$ with $\R(T)$ closed. Assume that $\R(T)$ and $\N(T)$ are Chebyshev subspaces in $Y$ and $X$,
respectively. Let $\delta T\in B(X,Y)$ such that $\R(\delta T)\subset\R(T)$, $\N(T)\subset\N(\delta T)$ and
$\|T^M\delta T\|<1$. Put $\bar{T}=T+\delta T$. If $T^M$ is quasi--additive on $\R(T)$, then $\bar T^M=T^M(I_Y+\delta TT^M)^{-1}=
(I_X+T^M\delta T)^{-1}T^M$ with
$$\dfrac{\|\bar T^M-T^M\|}{\|T^M\|}\le\dfrac{\|T^M\delta T\|}{1-\|T^M\delta T\|}.$$
\end{cor}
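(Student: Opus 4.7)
My plan is to first produce a bounded homogeneous generalized inverse $\bar T^h$ via Corollary \ref{corth3.6}, then upgrade it to $\bar T^M$ by showing that $\N(\bar T)=\N(T)$ and $\R(\bar T)=\R(T)$, and finally read off the norm bound from a short Neumann-type estimate. For the first step, the hypotheses of Corollary \ref{corth3.6} all hold with $T^h=T^M$: $T^M$ is quasi-additive on $\R(\delta T)\subset\R(T)$, $\|T^M\delta T\|<1$, and $\R(\delta T)\subset\R(T)$. This delivers $\bar T^h=T^M(I_Y+\delta TT^M)^{-1}=(I_X+T^M\delta T)^{-1}T^M$ as a bounded homogeneous generalized inverse of $\bar T$; in particular $\R(\bar T)$ is closed.

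For the null space, $\N(T)\subset\N(\bar T)$ is immediate from $\N(T)\subset\N(\delta T)$. Conversely, $\bar Tx=0$ yields $T^MTx=-T^M\delta Tx$, i.e.\ $(I_X+T^M\delta T)x=\pi_{\N(T)}x\in\N(T)$; since $\N(T)\subset\N(\delta T)$ forces $T^M\delta T$ to vanish on $\N(T)$, the inverse $(I_X+T^M\delta T)^{-1}$ fixes $\N(T)$ pointwise, and hence $x=\pi_{\N(T)}x\in\N(T)$. For the range, I would rewrite $\bar T=(I_Y+\delta TT^M)T$, which is valid because $\N(T)\subset\N(\delta T)$ gives $\delta T=\delta TT^MT$; by Lemma \ref{qlem2.8}, $I_Y+\delta TT^M$ is invertible on $Y$. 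Both $I_Y+\delta TT^M$ and its inverse map $\R(T)$ into itself (solving $z+\delta TT^Mz=y$ with $y\in\R(T)$ forces $z\in\R(T)$, using $\R(\delta T)\subset\R(T)$), so the restriction is a bijection and $\R(\bar T)=(I_Y+\delta TT^M)\R(T)=\R(T)$.

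Once $\N(\bar T)=\N(T)$ and $\R(\bar T)=\R(T)$, both spaces are Chebyshev, $\bar T^M$ exists, and Corollary \ref{cor2b} writes $\bar T^M=(I_X-\pi_{\N(T)})\bar T^h\pi_{\R(T)}$. The identity $T^M\pi_{\R(T)}=T^MTT^M=T^M$ absorbs the right projector, while Lemma \ref{qlem11.3} gives $\R(\bar T^h)=\R(T^M)\subset\R(I_X-\pi_{\N(T)})=\N(\pi_{\N(T)})$, which together with the quasi-additivity of $\pi_{\N(T)}$ on its own range yields $(I_X-\pi_{\N(T)})u=u$ for every $u\in\R(\bar T^h)$; both projectors drop out and $\bar T^M=\bar T^h$. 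The estimate then follows from
\[
\bar T^M-T^M=-(I_X+T^M\delta T)^{-1}T^M\delta T\,T^M
\]
together with the Neumann bound $\|(I_X+T^M\delta T)^{-1}\|\le 1/(1-\|T^M\delta T\|)$. The main subtlety is precisely this final cancellation: since $\pi_{\N(T)}$ is only quasi-linear, $I_X-\pi_{\N(T)}$ is not a genuine complementary linear projector, and the cleanest way to dispose of it is the range-containment argument just described.
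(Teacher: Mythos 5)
Your argument is correct and follows essentially the same route as the paper: both establish $\N(\bar T)=\N(T)$ and $\R(\bar T)=\R(T)$ from the factorization $\bar T=(I_Y+\delta TT^M)T=T(I_X+T^M\delta T)$, then identify $\bar T^M$ via Corollary \ref{cor2b} and finish with the Neumann bound. The only cosmetic difference is that you obtain the generalized inverse of $\bar T$ from Corollary \ref{corth3.6} rather than Theorem \ref{fmthm1.21}, and you spell out more carefully why the two metric projectors in $(I_X-\pi_{\N(T)})\bar T^h\pi_{\R(T)}$ cancel.
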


\begin{proof}From $\R(\delta T)\subset\R(T)$ and $\N(T)\subset\N(\delta T)$, we get that $\pi_{\R(T)}\delta T=\delta T$
and $\delta T\pi_{\N(T)}=0$, that is, $TT^M\delta T=\delta T=\delta TT^MT$. Consequently,
\begin{equation}\label{xxx}
\bar T=T+\delta T=T(I_X+T^M\delta T)=(I_Y+\delta TT^M)T
\end{equation}
Since $T^M$ is quasi--additive on $\R(T)$ and $\|T^M\delta T\|<1$, we get that $I_X+T^M\delta T$ and $I_Y+\delta TT^M$
are all invertible in $H(X,X)$. So from (\ref{xxx}), we have $\R(\bar T)=\R(T)$ and $\N(\bar T)=\N(T)$ and hence
$\bar T^H=T^M(I_Y+\delta TT^M)^{-1}=(I_X+T^M\delta T)^{-1}T^M$ by Theorem \ref{fmthm1.21}. Finally, by Corollary \ref{cor2b},
\begin{align*}
\bar T^M&=(I_X-\pi_{\N(\bar T)})\bar T^H\pi_{\R(\bar T)}=(I_X-\pi_{\N(T)})T^M(I_Y+\delta TT^M)^{-1}\pi_{\R(T)}\\
&=(I_X+T^M\delta T)^{-1}T^M\pi_{\R(T)}=(I_X+T^M\delta T)^{-1}T^M=T^M(I_Y+\delta TT^M)^{-1}
\end{align*}
and then
$$
\|\bar T^M-T^M\|\le\|(I_X-T^M\delta T)^{-1}-I_X\|\|T^M\|\leq\frac{\|T^M\delta T\|\|T^M\|}{1-\|T^M\delta T\|}.
$$
The proof is completed.
\end{proof}

%----------------------------------------------------------------------------------------
%------------------------------------------------------------------------------------%
\bibliography{JAMS-paper}

\end{document}